\documentclass[11pt]{article}
\usepackage[a4paper,margin=1in]{geometry}
\usepackage[utf8]{inputenc}
\usepackage[T1]{fontenc}

\usepackage{newtxtext,newtxmath}
\usepackage{xcolor}
\usepackage{graphicx}
\usepackage{setspace}
\usepackage{enumitem}
\usepackage{cancel}
\usepackage{ulem}%for sout
\usepackage{mathtools}

\usepackage{amsthm}

\usepackage{booktabs}
\usepackage{multirow}
\usepackage{adjustbox}
\usepackage{float}
\usepackage{makecell}
\usepackage{caption}
\usepackage{subcaption}

\usepackage{algorithm}
\usepackage[noend]{algpseudocode}
\algrenewcommand\algorithmicrequire{\textbf{Input:}}
\algrenewcommand\algorithmicensure{\textbf{Output:}}
\algrenewcommand\algorithmiccomment[1]{\hfill$\triangleright$~#1}
\algnewcommand{\LineComment}[1]{\State $\triangleright$~#1}

\usepackage[numbers]{natbib}
\usepackage[final,nopatch={footnote}]{microtype}

\usepackage[colorlinks=true, linkcolor=blue, citecolor=blue, urlcolor=blue]{hyperref}
\usepackage[nameinlink,noabbrev]{cleveref}

\newcounter{hypcount}

\newenvironment{hyp}{
  \begin{enumerate}[
    label=\textbf{(H\arabic*)},
    ref=H\arabic*,
    leftmargin=*,
    align=left,
    itemsep=0.6ex,
    topsep=0.8ex
  ]
  \setcounter{enumi}{\value{hypcount}}
}{
  \setcounter{hypcount}{\value{enumi}}
  \end{enumerate}
}

\newcommand{\Hyp}[1]{\textup{(\ref{#1})}}

\theoremstyle{plain}

\newtheorem{proposition}{Proposition}[section]
\newtheorem{lemma}{Lemma}[section]

\newtheoremstyle{dashremark}
  {6pt}
  {6pt}
  {\normalfont}
  {}
  {\bfseries\itshape}
  {.}
  {0.5em}
  {\thmname{#1}\thmnumber{~#2}\thmnote{ \textup{(#3)}}}

\theoremstyle{dashremark}
\newtheorem{remark}{Remark}[section]

\crefname{theorem}{theorem}{theorems}
\Crefname{theorem}{Theorem}{Theorems}
\crefname{lemma}{lemma}{lemmas}
\Crefname{lemma}{Lemma}{Lemmas}
\crefname{proposition}{proposition}{propositions}
\Crefname{proposition}{Proposition}{Propositions}
\crefname{remark}{remark}{remarks}
\Crefname{remark}{Remark}{Remarks}
\crefname{corollary}{corollary}{corollaries}
\Crefname{corollary}{Corollary}{Corollaries}
\crefname{definition}{definition}{definitions}
\Crefname{definition}{Definition}{Definitions}

\definecolor{darkred}{RGB}{180,0,0}
\definecolor{jfc}{RGB}{160,0,160}

\title{A posteriori error bounds for the Uncertain Volatility Model}

\author{
Lokman A. Abbas-Turki\thanks{Université Paris Cité and Sorbonne Université, Laboratoire de Probabilités, Statistique et Modélisation (LPSM, UMR CNRS 8001), Paris, France. \texttt{lokmane.abbas\_turki@sorbonne-universite.fr}} 
\and
Jean-François Chassagneux\thanks{ENSAE-CREST and Institut Polytechnique de Paris, Paris, France. \texttt{chassagneux@ensae.fr}. This work benefited from Hi$!$ PARIS and State funding managed by the French National Research Agency (ANR) under the France 2030 program, reference ANR-23-IACL-0005.}
\and
Jean-Philippe Lemor\thanks{BNP Paribas Global Markets, Paris, France. \texttt{jean-philippe.lemor@bnpparibas.com}} 
\and Grégoire Loeper\thanks{BNP Paribas Global Markets, Paris, France. \texttt{gregoire.loeper@bnpparibas.com}} 
\and Simon Sananes\thanks{BNP Paribas Global Markets, Université Paris Cité and Sorbonne Université, Laboratoire de Probabilités, Statistique et Modélisation (LPSM, UMR CNRS 8001), Paris, France. \texttt{sananes@lpsm.paris}} 
}

\date{\today}

\begin{document}

\maketitle

\begin{abstract}
We develop a posteriori primal-dual bounds for numerical approximations of European option prices in the Uncertain Volatility Model. Given a smooth candidate approximation of the value function, a feedback control induced by its Hessian yields a primal lower bound. On the dual side, we derive a representation based on a matrix-valued Gamma field, which provides an upper bound through a nonnegative Hamiltonian penalty. We further show that, when the dual field is generated by the candidate itself, this upper bound admits an equivalent representation in terms of the residual of the associated Black-Scholes-Barenblatt equation. We then study discrete-time approximations of these primal and dual quantities and quantify the corresponding discretization errors. Finally, we investigate their numerical evaluation for candidates obtained by stochastic policy-gradient and physics-informed neural-network methods, and compare the resulting estimates with a martingale dual approach from the stochastic-control literature. The numerical experiments highlight the importance of derivative accuracy, and in particular of second-order information, for obtaining tight a posteriori bounds.
\end{abstract}

\medskip
\noindent\textbf{Keywords:}
Uncertain volatility model; a posteriori error bounds; robust option pricing;
stochastic control; primal-dual bounds; Hamilton-Jacobi-Bellman equations.

\section{Introduction}

In this work, we are interested in computing a posteriori numerical bounds for the price of a European option in the Uncertain Volatility Model (UVM). In this framework, volatility and correlation are not specified by a single model; instead, they are selected dynamically and adversarially within a prescribed uncertainty set. The robust price of the option can therefore be formulated as a stochastic control problem in which the diffusion coefficient of the underlying assets is controlled \cite{ALP95,GV02}. The associated value function solves a fully nonlinear Hamilton-Jacobi-Bellman equation, whose nonlinearity involves a supremum over the admissible volatility controls.

Numerical approximation of the UVM price is challenging even in moderate
dimension. In dimension $d$, the control involves $d$ marginal
volatilities and $d(d-1)/2$ pairwise correlations. The correlation
coefficients cannot be optimized independently: the resulting correlation
matrix must remain positive semidefinite while satisfying the prescribed
componentwise bounds. The optimization of the Hamiltonian is therefore a genuinely constrained matrix optimization problem at every time and state. Moreover, the optimal feedback control need not depend smoothly on the state. Already in dimension one, the extremal volatility is selected according to the sign of the Gamma of the value function, leading to a bang-bang type control. In higher dimension, analogous changes of regime are driven by the matrix of second-order derivatives.

As a particular stochastic control problem, the UVM can be approached by several classes of numerical methods. These include PDE methods \cite{MF17}, 2BSDE methods \cite{FTW11,GZZ15,BEJ19,PWG21}, backward dynamic-programming methods \cite{HPBL21,BHLP22}, and neural PDE methods such as PINNs and related approaches \cite{raissi2019pinns,sirignano2018dgm,cohen2025neural}. The works \cite{GH11,GMZ26,Abb+26} focus specifically on numerical pricing in the UVM. Such methods typically aim at producing an accurate approximation of the value function and, in some cases, an admissible control yielding a lower estimate of the robust price. For applications in which the numerical error itself matters, however, a point estimate or a one-sided estimate is not sufficient: one would like to associate with a given numerical candidate an interval that quantifies its distance from the true UVM value.

Our objective is therefore to construct, from a single candidate approximation of the pricing function, computable primal and dual quantities that bracket the theoretical UVM price, up to the numerical errors introduced when these quantities are themselves approximated.

We first derive continuous-time lower and upper bounds associated with a smooth candidate. The lower bound is obtained from the admissible feedback control induced by the candidate Gamma. For the upper bound, we establish a dual representation of the UVM value in terms of a matrix-valued Gamma field. The corresponding dual penalty is the nonnegative gap between the Hamiltonian and its value under a given admissible control, and hence measures the failure of that control to maximize the Hamiltonian for the prescribed Gamma field. Specializing the dual field to the Gamma of the candidate yields a first upper bound. We then show that the same dual quantity can equivalently be written as the candidate value plus an a posteriori error term involving the residual of the Black-Scholes-Barenblatt equation. Thus the evaluation of the error itself takes the form of a stochastic control problem. Both representations are exact when the candidate coincides with the true value function.

In a second step, we introduce discrete-time approximations of the primal and dual quantities and quantify the corresponding discretization errors using existing results from the stochastic-control literature. We then discuss their practical evaluation for candidate functions produced by different numerical methods. A central point is that an accurate approximation of the option price does not by itself guarantee a tight a posteriori interval: the dual constructions depend explicitly on derivatives of the candidate, and inaccuracies in these derivatives may lead to a substantial duality gap.

In the numerical experiments, we compare our bounds with the martingale dual estimate based on the stochastic-control duality of \cite{HLR16}. In its discretized implementation, this approach introduces a martingale correction constructed from the candidate gradient and performs a pathwise maximization over discretized control sequences. By contrast, our dual quantities exploit the candidate Gamma directly through the UVM Hamiltonian, and therefore make explicit use of second-order information. We observe that, for most of the examples considered, the resulting upper estimates are closer to the reference UVM value and hence lead to tighter numerical intervals. This is an empirical observation rather than a general ordering between
the different dual formulations; the experiments also show that the
tightness of these estimates depends strongly on the accuracy of the
candidate derivatives.

\bigskip
The rest of the paper is organized as follows. In~\Cref{sec:uvm}, we present
the UVM framework. In~\Cref{sec:aposteriori-error-control}, we derive the theoretical a posteriori
error bounds. \Cref{sec:numerical-bounds} is concerned with discrete-time approximations
of the lower and upper bounds associated with a given candidate.
\Cref{sec:numerical-methodology-experiments} describes the numerical implementation and presents numerical
experiments illustrating the behavior of the different primal and dual
estimates.

\paragraph{Notation.}
We denote by $\mathbb S^d$ the space of real symmetric $d\times d$ matrices.
For a subset $E$ of a finite-dimensional normed space and a finite-dimensional
normed space $F$, $\mathcal C^0(E;F)$ denotes the space of continuous
functions from $E$ to $F$, and $\mathcal C^0_{\mathrm{pol}}(E;F)$ its
subspace of functions with polynomial growth, namely those $f$ for which
there exist constants $C>0$ and $m\geq0$ such that
\[
\|f(z)\|_F
\leq
C\bigl(1+\|z\|^m\bigr),
\qquad z\in E.
\]

When the domain is $[0,T]\times D$, polynomial growth always refers to the
spatial variable, uniformly in time. Thus
$f\in\mathcal C^0_{\mathrm{pol}}([0,T]\times D;F)$ if there exist
$C>0$ and $m\geq0$ such that
\[
\|f(t,x)\|_F
\leq
C\bigl(1+\|x\|^m\bigr),
\qquad
(t,x)\in[0,T]\times D.
\]
For functions defined only before maturity, we write
$f\in\mathcal C^0_{\mathrm{loc,pol}}([0,T)\times D;F)$ if
$f\in\mathcal C^0([0,T)\times D;F)$ and there exists $m\geq0$ such that,
for every $\tau<T$, there is a constant $C_\tau>0$ satisfying
\[
\|f(t,x)\|_F
\leq
C_\tau\bigl(1+\|x\|^m\bigr),
\qquad
(t,x)\in[0,\tau]\times D.
\]

We similarly write
$f\in\mathcal C^{1,2}_{\mathrm{pol}}([0,T]\times D;F)$ if
$f\in\mathcal C^{1,2}([0,T]\times D;F)$ and there exist
$C>0$ and $m\geq0$ such that
\[
\|f(t,x)\|_F
+
\|\partial_t f(t,x)\|_F
+
\|D_x f(t,x)\|
+
\|D_x^2 f(t,x)\|
\leq
C\bigl(1+\|x\|^m\bigr),
\]
for all $(t,x)\in[0,T]\times D$. Finally,
$f\in\mathcal C^{1,2}_{\mathrm{loc,pol}}([0,T)\times D;F)$ if
$f\in\mathcal C^{1,2}([0,T)\times D;F)$ and there exists $m\geq0$ such
that, for every $\tau<T$, there is a constant $C_\tau>0$ satisfying
\[
\|f(t,x)\|_F
+
\|\partial_t f(t,x)\|_F
+
\|D_x f(t,x)\|
+
\|D_x^2 f(t,x)\|
\leq
C_\tau\bigl(1+\|x\|^m\bigr),
\]
for all $(t,x)\in[0,\tau]\times D$.

\section{The Uncertain Volatility Model}
\label{sec:uvm}

The Uncertain Volatility Model (UVM) is a framework for robust option
pricing and hedging under joint volatility and correlation uncertainty.
In the multidimensional setting, pricing a European option can be
formulated as a stochastic control problem in which the risky assets form
the controlled state process and the control specifies the volatility and
correlation coefficients.

The UVM was introduced in~\cite{ALP95}; for the
multidimensional formulation considered here, we refer e.g. to~\cite{GV02,Abb+26}.

Fix a dimension $d\geq 1$, a maturity $T\in(0,+\infty)$, and a constant short rate
$r\geq 0$. For each asset $i\in\{1,\ldots,d\}$, let
\[
0<\sigma_{\min}^i\leq \sigma_{\max}^i<+\infty
\]
be the prescribed volatility bounds, and, for each
$1\leq i<j\leq d$, let
\[
-1\leq \rho_{\min}^{ij}\leq \rho_{\max}^{ij}\leq 1
\]
be the prescribed correlation bounds. Denoting by $\mathcal E^d$ the
elliptope of $d$-dimensional correlation matrices, we define the set of
admissible volatility coefficients by
\[
\begin{aligned}
A \coloneqq
\Big\{
a\in\mathbb R^{d\times d}:\;&
aa^\top
=
\operatorname{diag}(\sigma)\rho\operatorname{diag}(\sigma),\\
&
\sigma\in\prod_{i=1}^d[\sigma_{\min}^i,\sigma_{\max}^i],
\quad
\rho\in\mathcal E^d,\quad
\rho^{ij}\in[\rho_{\min}^{ij},\rho_{\max}^{ij}],
\quad
\forall\,1\leq i<j\leq d
\Big\}.
\end{aligned}
\]
Observe that it is a compact subset of
$\mathbb R^{d\times d}$.

For every $a\in A$, the corresponding volatility vector and correlation
matrix are uniquely recovered from $aa^\top$ through
\[
\sigma^i=\sqrt{(aa^\top)^{ii}},
\qquad
\rho^{ij}
=
\frac{(aa^\top)^{ij}}{\sigma^i\sigma^j},
\qquad
1\leq i<j\leq d,
\]
which is well-defined since $\sigma_{\min}^i>0$ for every $i$. Conversely,
an admissible pair $(\sigma,\rho)$ does not in general determine a unique
volatility coefficient $a$, since different matrix factorizations may yield
the same covariance matrix
$\operatorname{diag}(\sigma)\rho\operatorname{diag}(\sigma)$. No particular
factorization is selected in the continuous-time formulation: the local
covariance of the controlled dynamics and the associated Hamiltonian introduced below depend on $a$ only through $aa^\top$.

Let $g:(0,+\infty)^d\to\mathbb R$ denote the payoff of a European option
with maturity $T$. We impose the following assumptions throughout on the set of admissible volatilities and the payoff function.
\begin{hyp}
\item \label{hyp:payoff}
The function $g$ is locally Lipschitz and $g$ and
$\nabla_x g$, defined a.e., have polynomial growth.
\item \label{hyp:ellipticity} The set $A$ is non empty and for every $a\in A$, the matrix $aa^\top$ is positive definite.
\end{hyp}

\noindent For each $t\in[0,T]$, we consider the canonical reference probability
system
\[
\big(\Omega^t,(\mathcal F_s^t)_{s\in[t,T]},\mathbb P^t,W^t\big),
\]
where
\[
\Omega^t
\coloneqq
\big\{
\omega\in\mathcal C^0([t,T];\mathbb R^d):\omega_t=0
\big\},
\]
$W^t$ is the canonical process on $\Omega^t$, $\mathbb P^t$ is Wiener
measure, and $(\mathcal F_s^t)_{s\in[t,T]}$ is the
$\mathbb P^t$-augmentation of the canonical filtration generated by $W^t$.
We denote by
\[
\mathcal A^t
\coloneqq
\Big\{
\alpha=(\alpha_s)_{s\in[t,T]}:
\alpha
\text{ is }(\mathcal F_s^t)_{s\in[t,T]}\text{-progressively measurable
and $A$-valued}
\Big\}
\]
the set of admissible controls.

For $(t,x)\in[0,T]\times(0,+\infty)^d$ and
$\alpha\in\mathcal A^t$, let $X^{\alpha,t,x}$ be the solution of
\begin{equation}\label{eq:UVM-multi-dynamics}
\begin{cases}
dX_s^{\alpha,t,x}
=
rX_s^{\alpha,t,x}\,ds
+
\operatorname{diag}\big(X_s^{\alpha,t,x}\big)\alpha_s\,dW_s^t,
& s\in[t,T],\\
X_t^{\alpha,t,x}=x.
\end{cases}
\end{equation}
The UVM value function is
\begin{equation}
V(t,x)
\coloneqq
\sup_{\alpha\in\mathcal A^t}
\mathbb E^{\mathbb P^t}
\left[
e^{-r(T-t)}g\big(X_T^{\alpha,t,x}\big)
\right].
\end{equation}
We also introduce the Black-Scholes-Barenblatt (BSB) equation
\begin{equation}\label{eq:BSB-multi}
\begin{cases}
\begin{aligned}
&\partial_tV(t,x)
+r\big(x^\top\nabla_xV(t,x)-V(t,x)\big)\\
&\qquad
+\frac12\sup_{a\in A}
G\big(x,\nabla_x^2V(t,x),a\big)
=0,
\end{aligned}
& (t,x)\in[0,T)\times(0,+\infty)^d,\\[3pt]
V(T,x)=g(x),
& x\in(0,+\infty)^d,
\end{cases}
\end{equation}
where, for
$(x,\gamma,a)\in(0,+\infty)^d\times\mathbb S^d\times A$,
\begin{align*}
G(x,\gamma,a)
&\coloneqq
\operatorname{Tr}\left[
\gamma\,
\operatorname{diag}(x)\,
aa^\top\,
\operatorname{diag}(x)
\right]\\
&=
\operatorname{Tr}\left[
\gamma\,
\operatorname{diag}(x\odot\sigma)\,
\rho\,
\operatorname{diag}(x\odot\sigma)
\right]\\
&=
\sum_{i,j=1}^d
x^ix^j\sigma^i\sigma^j\rho^{ij}\gamma^{ij},
\end{align*}
with $\sigma$ and $\rho$ denoting the volatility vector and correlation
matrix associated with $aa^\top$. Finally, for $\gamma\in\mathbb S^d$, we define the Hamiltonian
\begin{equation}\label{eq:support-property}
H^\star[\gamma]
\coloneqq
\sup_{a\in A}\operatorname{Tr}[\gamma aa^\top].
\end{equation}
Then, for every $(x,\gamma)\in(0,+\infty)^d\times\mathbb S^d$,
\[
\frac12\sup_{a\in A}G(x,\gamma,a)
=
H^\star\left[
\frac12\operatorname{diag}(x)\gamma\operatorname{diag}(x)
\right].
\]

Under~\Hyp{hyp:payoff}, the strong formulation considered above coincides
with the weak formulation of the UVM studied in~\cite{GV02}, see
\cite[Proposition~2.1]{Abb+26}. Therefore, by~\cite[Theorem~7]{GV02}, $V$ is the unique
viscosity solution of~\eqref{eq:BSB-multi} with polynomial growth. Moreover, under~\Hyp{hyp:payoff} and~\Hyp{hyp:ellipticity}, by~\cite[Theorem~11]{GV02},
\[
V\in
\mathcal C^0_{\mathrm{pol}}
\big([0,T]\times(0,+\infty)^d;\mathbb R\big)
\cap
\mathcal C^{1,2}_{\mathrm{loc,pol}}
\big([0,T)\times(0,+\infty)^d;\mathbb R\big).
\]
In particular, $V$ is a classical solution of~\eqref{eq:BSB-multi} on
$[0,T)\times(0,+\infty)^d$.

We will repeatedly use the following standard estimates for the controlled state process.

\begin{lemma}
\label{lem:state-estimates}
For every $p>0$, there exists a constant $C_p>0$ such that, for every
$t\in[0,T]$, $x,z\in(0,+\infty)^d$, and $\alpha\in\mathcal A^t$, the
solution $X^{\alpha,t,x}$ of~\eqref{eq:UVM-multi-dynamics} satisfies
\begin{equation}
\label{eq:moment-estimate}
\mathbb E^{\mathbb P^t}
\left[
\sup_{u\in[t,T]}
|X_u^{\alpha,t,x}|^p
\right]
\leq
C_p(1+|x|^p),
\end{equation}
and
\begin{equation}
\label{eq:state-stability}
\mathbb E^{\mathbb P^t}
\left[
\sup_{u\in[t,T]}
|X_u^{\alpha,t,x}-X_u^{\alpha,t,z}|^p
\right]
\leq
C_p|x-z|^p.
\end{equation}
Moreover, for every $h\in[0,T-t]$,
\begin{equation}
\label{eq:state-increment}
\mathbb E^{\mathbb P^t}
\left[
|X_{t+h}^{\alpha,t,x}-x|^p
\right]
\leq
C_p(1+|x|^p)h^{p/2}.
\end{equation}
The constant $C_p$ is independent of $t$, $x$, $z$, and $\alpha$.
\end{lemma}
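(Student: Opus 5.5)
The plan is to treat \eqref{eq:UVM-multi-dynamics} as a linear SDE with bounded random coefficients and to rely on Burkholder--Davis--Gundy (BDG) and Gronwall estimates. Throughout, the only properties of the control we use are that $\alpha$ is progressively measurable and that $A$ is compact. Set $K\coloneqq\sup_{a\in A}\|a\|<\infty$. By It\^o's formula applied componentwise, we have the explicit representation
\[
X_u^{\alpha,t,x,i}=x^i\exp\Bigl(\int_t^u\bigl(r-\tfrac12(\alpha_s\alpha_s^\top)^{ii}\bigr)ds+\int_t^u(\alpha_s dW_s^t)^i\Bigr),
\]
which gives existence, uniqueness and positivity. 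For the estimates themselves, however, we will work directly with the SDE. The drift $b(x)=rx$ and the diffusion $\sigma_s(x)=\operatorname{diag}(x)\alpha_s$ are linear in $x$. Their Lipschitz constants, $r$ and $K$, do not depend on $\omega$, $s$ or $\alpha$. It suffices to treat $p\ge2$, since the case $p\in(0,2)$ follows from Jensen's (or Lyapunov's) inequality: $\mathbb E[Y^p]\le \mathbb E[Y^2]^{p/2}$, combined with $(1+|x|^2)^{p/2}\le C(1+|x|^p)$.

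For \eqref{eq:moment-estimate}, we write $X_u=x+\int_t^u rX_s\,ds+\int_t^u\operatorname{diag}(X_s)\alpha_s\,dW_s^t$. Taking the supremum over $[t,v]$ and the $p$-th power, we apply Hölder's inequality to the drift term and the BDG inequality to the stochastic integral. Using $\|\operatorname{diag}(X_s)\alpha_s\|\le K|X_s|$, this yields
\[
\phi(v)\coloneqq\mathbb E\Bigl[\sup_{u\in[t,v]}|X_u|^p\Bigr]\le C\Bigl(|x|^p+\int_t^v\phi(s)\,ds\Bigr),
\]
with $C$ depending only on $p,T,r,K,d$. Before applying Gronwall's lemma, we must know that $\phi$ is finite. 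We get this by localization: introduce the stopping times $\tau_n=\inf\{u:|X_u|\ge n\}$, derive the same inequality for the stopped process, apply Gronwall, and then pass to the limit $n\to\infty$ by Fatou's lemma. This localization is the only slightly delicate point of the whole argument; alternatively, finiteness can be read off directly from the explicit exponential formula. The result is $\phi(T)\le C_p|x|^p\le C_p(1+|x|^p)$.

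For \eqref{eq:state-stability}, we use the fact that the equation is linear. The difference $\Delta=X^{\alpha,t,x}-X^{\alpha,t,z}$ solves the same SDE with initial condition $x-z$, because $\operatorname{diag}(X)-\operatorname{diag}(X')=\operatorname{diag}(\Delta)$ and we use the same control $\alpha$ and the same Brownian motion. Hence the argument of the previous paragraph applies verbatim to $\Delta$ and gives the bound $C_p|x-z|^p$.

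For \eqref{eq:state-increment}, we start from
\[
X_{t+h}-x=\int_t^{t+h}rX_s\,ds+\int_t^{t+h}\operatorname{diag}(X_s)\alpha_s\,dW_s^t .
\]
For the drift term, Hölder's inequality gives a bound $h^{p}\,\mathbb E\sup|X|^p$, and since $h\le T$ we have $h^p\le T^{p/2}h^{p/2}$. For the stochastic integral, BDG followed by Hölder gives $C\,\mathbb E\bigl[(\int_t^{t+h}K^2|X_s|^2ds)^{p/2}\bigr]\le C K^p h^{p/2}\,\mathbb E\sup|X|^p$. Inserting \eqref{eq:moment-estimate} then yields $C_p(1+|x|^p)h^{p/2}$. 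In all three estimates, the constants depend only on $(p,d,T,r,K)$, so they are uniform in $t$, $x$, $z$ and $\alpha$, as claimed.
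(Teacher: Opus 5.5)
Your proof is correct and follows the same route as the paper. The paper checks that the coefficients $ry$ and $\operatorname{diag}(y)\alpha_u$ are Lipschitz with linear growth, uniformly in $u$ and $\alpha$, and then cites the standard estimates (D.4), (D.5), (D.8) of \cite{FS06} for $p\geq2$, with H\"older for $0<p<2$; you instead derive the same estimates directly by BDG, Gronwall and localization, and use linearity to reduce the stability bound to the moment bound in place of the general Lipschitz-stability estimate (D.8).
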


The proof is given in Appendix~\ref{app:technical}.

\section{A posteriori error control}
\label{sec:aposteriori-error-control}

We now derive a posteriori bounds for smooth approximations of the UVM
value function. We first establish a dual representation of $V$ in terms
of matrix-valued fields. We then specialize this representation to the
field generated by the Hessian of a smooth candidate. This yields a dual
upper bound, while the same candidate induces a natural feedback control
and hence a primal lower bound.

\subsection{Dual representation}
\label{subsec:duality}

For a function $v$ which is twice differentiable in space, define
\[
\Gamma_v(t,x)
\coloneqq
\frac12
\operatorname{diag}(x)
\nabla^2_{xx}v(t,x)
\operatorname{diag}(x).
\]
In dimension one,
\[
2\Gamma_v(t,x)
=
x^2\partial_{xx}^2v(t,x),
\]
which is commonly referred to as the cash gamma or dollar gamma. Thus our normalization corresponds to one half of the usual cash gamma in dimension one. The factor $1/2$ is natural in
second-order hedging formulas, where
$\frac12x^2\partial_{xx}^2v$ multiplies the variance term, see, e.g.~\cite{CM98}.
In particular, the regularity established in \Cref{sec:uvm} implies that 
\[
\Gamma_V
\in
\mathcal C^0_{\mathrm{loc,pol}}
\big([0,T)\times (0,+\infty)^d;\mathbb S^d\big).
\]

For $\Gamma
\in
\mathcal C^0_{\mathrm{loc,pol}}
\big([0,T)\times (0,+\infty)^d;\mathbb S^d\big)$,
define
\begin{align*}
\mathcal D(\Gamma)(t,x)
\coloneqq
\sup_{\alpha\in\mathcal A^t}
\mathbb E^{\mathbb P^t}\Bigg[
&e^{-r(T-t)}
g(X_T^{\alpha,t,x})
\nonumber\\
&+
\int_t^T
e^{-r(s-t)}
\Big(
H^\star[\Gamma(s,X_s^{\alpha,t,x})]
-
\operatorname{Tr}\big[
\Gamma(s,X_s^{\alpha,t,x})
\alpha_s\alpha_s^\top
\big]
\Big)\,ds
\Bigg].
\end{align*}
Note that by the definition of $H^\star$,
\begin{equation}\label{eq:duality-gap-nonnegative}
H^\star[\gamma]
-
\operatorname{Tr}[\gamma aa^\top]
\geq0,
\qquad
(\gamma,a)\in\mathbb S^d\times A.
\end{equation}
Since the terminal payoff is integrable, $\mathcal D(\Gamma)(t,x)$ is
therefore well defined with values in $\mathbb R\cup\{+\infty\}$.

\begin{proposition}[Dual representation]
\label{prop:uvm-convex-duality}
For every $(t,x)\in[0,T]\times (0,+\infty)^d$,
\begin{equation}\label{eq:dual-representation}
V(t,x)
=
\inf_{\Gamma\in
\mathcal C^0_{\mathrm{loc,pol}}
([0,T)\times (0,+\infty)^d;\mathbb S^d)}
\mathcal D(\Gamma)(t,x)
=
\mathcal D(\Gamma_V)(t,x).
\end{equation}
\end{proposition}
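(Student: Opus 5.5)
The proof splits into a lower bound, valid for every admissible field, and an upper bound attained by the particular choice $\Gamma=\Gamma_V$.

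\emph{Lower bound.} For every $\Gamma\in\mathcal C^0_{\mathrm{loc,pol}}([0,T)\times(0,+\infty)^d;\mathbb S^d)$ we have $\mathcal D(\Gamma)(t,x)\geq V(t,x)$. This is immediate from the nonnegativity \eqref{eq:duality-gap-nonnegative}: for each fixed $\alpha\in\mathcal A^t$ the integral term in the definition of $\mathcal D(\Gamma)$ is nonnegative. Hence the expectation inside $\mathcal D(\Gamma)$ dominates $\mathbb E[e^{-r(T-t)}g(X_T^{\alpha,t,x})]$. Taking the supremum over $\alpha$ gives $\mathcal D(\Gamma)\geq V$, and therefore $\inf_\Gamma\mathcal D(\Gamma)\geq V$. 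The integrability of the terminal payoff, which follows from \eqref{eq:moment-estimate} and \Hyp{hyp:payoff}, ensures that every expectation is well defined in $\mathbb R\cup\{+\infty\}$.

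\emph{Upper bound.} It remains to show $\mathcal D(\Gamma_V)(t,x)\leq V(t,x)$; since $\Gamma_V$ belongs to the admissible class, this yields both equalities. Fix $\alpha\in\mathcal A^t$ and $\tau<T$. Apply Itô's formula to $s\mapsto e^{-r(s-t)}V(s,X_s)$ on $[t,\tau]$, where $X=X^{\alpha,t,x}$; this is legitimate since $V\in\mathcal C^{1,2}$ on $[0,\tau]\times(0,+\infty)^d$. The drift equals $e^{-r(s-t)}\big(\partial_tV+r(x^\top\nabla_xV-V)+\operatorname{Tr}[\Gamma_V\alpha_s\alpha_s^\top]\big)$, evaluated at $(s,X_s)$. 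By the BSB equation \eqref{eq:BSB-multi} and the identity $\frac12\sup_aG=H^\star[\Gamma_V]$, this drift equals $-e^{-r(s-t)}\big(H^\star[\Gamma_V]-\operatorname{Tr}[\Gamma_V\alpha_s\alpha_s^\top]\big)(s,X_s)$. The stochastic integral $\int e^{-r(s-t)}\nabla_xV^\top\operatorname{diag}(X_s)\alpha_s\,dW_s$ is a true martingale on $[t,\tau]$: by the $\mathcal C^{1,2}_{\mathrm{loc,pol}}$ bound with constant $C_\tau$, the boundedness of $A$, and \eqref{eq:moment-estimate}, the integrand is square integrable. We therefore obtain the exact identity
\[
V(t,x)=\mathbb E\Big[e^{-r(\tau-t)}V(\tau,X_\tau)+\int_t^\tau e^{-r(s-t)}\big(H^\star[\Gamma_V]-\operatorname{Tr}[\Gamma_V\alpha_s\alpha_s^\top]\big)(s,X_s)\,ds\Big].
\]

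\emph{Passage $\tau\uparrow T$ (main obstacle).} Here $\Gamma_V$ may blow up near maturity, since $C_\tau$ need not stay bounded. The integrand of the $ds$-integral is nonnegative, so monotone convergence sends that term to its value on $[t,T]$, possibly $+\infty$. For the first term, $V\in\mathcal C^0_{\mathrm{pol}}([0,T]\times(0,+\infty)^d)$ and $X$ has continuous paths, so $V(\tau,X_\tau)\to V(T,X_T)=g(X_T)$ almost surely. Moreover, uniform polynomial growth together with \eqref{eq:moment-estimate} applied to $\sup_u|X_u|$ gives uniform integrability, so the expectation converges to $\mathbb E[e^{-r(T-t)}g(X_T)]$. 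It follows that the penalized expectation associated with $\alpha$ equals $V(t,x)$ (in particular it is finite) for every $\alpha$. Taking the supremum over $\alpha$ gives $\mathcal D(\Gamma_V)(t,x)=V(t,x)$. The case $t=T$ is trivial, since both sides equal $g(x)$.

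The delicate points are the martingale property and the limit $\tau\to T$. Both are handled by working on $[t,\tau]$ with the local polynomial bounds and then using nonnegativity of the penalty together with the global continuity and growth of $V$.
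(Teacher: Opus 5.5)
Your proof is correct and follows the paper's argument essentially step for step. You obtain $V\le\mathcal D(\Gamma)$ from the nonnegativity of the Hamiltonian gap, apply It\^o's formula to $e^{-r(s-t)}V(s,X_s)$ on $[t,\tau]$ together with the BSB equation to get the exact identity, and let $\tau\uparrow T$ using uniform integrability of the terminal term and monotone convergence of the nonnegative penalty.
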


\begin{proof}
Let
$
\Gamma
\in
\mathcal C^0_{\mathrm{loc,pol}}
\big([0,T)\times (0,+\infty)^d;\mathbb S^d\big).
$
For every $\alpha\in\mathcal A^t$,
\eqref{eq:duality-gap-nonnegative} yields
\begin{align*}
\mathbb E^{\mathbb P^t}
\left[
e^{-r(T-t)}g(X_T^{\alpha,t,x})
\right]
&\leq
\mathbb E^{\mathbb P^t}\Bigg[
e^{-r(T-t)}g(X_T^{\alpha,t,x})
\\
&\qquad+
\int_t^T e^{-r(s-t)}
\Big(
H^\star[\Gamma(s,X_s^{\alpha,t,x})]
-
\operatorname{Tr}\big[
\Gamma(s,X_s^{\alpha,t,x})
\alpha_s\alpha_s^\top
\big]
\Big)\,ds
\Bigg].
\end{align*}
Taking the supremum over $\alpha$ gives
\begin{equation}\label{eq:dual-upper-bound}
V(t,x)\leq\mathcal D(\Gamma)(t,x),
\end{equation}
and hence
\[
V(t,x)
\leq
\inf_{\Gamma\in
\mathcal C^0_{\mathrm{loc,pol}}
([0,T)\times (0,+\infty)^d;\mathbb S^d)}
\mathcal D(\Gamma)(t,x).
\]

It remains to evaluate the dual functional at $\Gamma_V$.
The case $t=T$ is immediate, so fix $t<T$,
$\alpha\in\mathcal A^t$, and $\tau\in(t,T)$.
Since
\[
V\in
\mathcal C^{1,2}_{\mathrm{loc,pol}}
\big([0,T)\times (0,+\infty)^d;\mathbb R\big),
\]
It\^o's formula may be applied to
\[
s\mapsto
e^{-r(s-t)}
V(s,X_s^{\alpha,t,x})
\]
on $[t,\tau]$. The polynomial growth of $\nabla_xV$ on
$[0,\tau]\times (0,+\infty)^d$, together with the moment estimates above and the
boundedness of $A$, implies that the stochastic integral is a true
martingale. Using~\eqref{eq:BSB-multi}, we obtain
\begin{align}
V(t,x)
=
\mathbb E^{\mathbb P^t}\Bigg[
&e^{-r(\tau-t)}
V(\tau,X_\tau^{\alpha,t,x})
\nonumber\\
&+
\int_t^\tau e^{-r(s-t)}
\Big(
H^\star[\Gamma_V(s,X_s^{\alpha,t,x})]
-
\operatorname{Tr}\big[
\Gamma_V(s,X_s^{\alpha,t,x})
\alpha_s\alpha_s^\top
\big]
\Big)\,ds
\Bigg].
\label{eq:dual-identity-before-terminal}
\end{align}

We now let $\tau\uparrow T$. By continuity of $V$ on
$[0,T]\times (0,+\infty)^d$, the terminal condition $V(T,\cdot)=g$, and the
continuity of $X^{\alpha,t,x}$,
\[
e^{-r(\tau-t)}
V(\tau,X_\tau^{\alpha,t,x})
\longrightarrow
e^{-r(T-t)}
g(X_T^{\alpha,t,x})
\qquad
\mathbb P^t\text{-a.s.}
\]
Moreover, the polynomial growth of $V$ and the moment estimates for
$X^{\alpha,t,x}$ imply that the family of terminal random variables above
is uniformly integrable. Their expectations therefore converge.

The integrand in~\eqref{eq:dual-identity-before-terminal} is nonnegative
by~\eqref{eq:duality-gap-nonnegative}. Hence, by monotone convergence,
\begin{align*}
V(t,x)
=
\mathbb E^{\mathbb P^t}\Bigg[
&e^{-r(T-t)}
g(X_T^{\alpha,t,x})
\\
&+
\int_t^T e^{-r(s-t)}
\Big(
H^\star[\Gamma_V(s,X_s^{\alpha,t,x})]
-
\operatorname{Tr}\big[
\Gamma_V(s,X_s^{\alpha,t,x})
\alpha_s\alpha_s^\top
\big]
\Big)\,ds
\Bigg].
\end{align*}
Since this identity holds for every $\alpha\in\mathcal A^t$, taking the
supremum over $\alpha$ yields
\[
\mathcal D(\Gamma_V)(t,x)=V(t,x).
\]
Together with~\eqref{eq:dual-upper-bound}, this proves
\eqref{eq:dual-representation}.
\end{proof}

\subsection{Primal-dual bounds induced by a smooth candidate}
\label{subsec:candidate-primal-dual}

Let
\begin{equation}
\label{eq:smooth-candidate-class}
v
\in
\mathcal C^0_{\mathrm{pol}}
\big([0,T]\times(0,+\infty)^d;\mathbb R\big)
\cap
\mathcal C^{1,2}_{\mathrm{loc,pol}}
\big([0,T)\times(0,+\infty)^d;\mathbb R\big)
\end{equation}
be a smooth candidate. Its associated field $\Gamma_v$ belongs to
\[
\mathcal C^0_{\mathrm{loc,pol}}
\big([0,T)\times(0,+\infty)^d;\mathbb S^d\big).
\]
We define its BSB residual by
\begin{equation}
\label{eq:uvm-residual}
\mathcal R(v)(t,x)
\coloneqq
\partial_t v(t,x)
+
r\big(x^\top\nabla_xv(t,x)-v(t,x)\big)
+
H^\star[\Gamma_v(t,x)],
\qquad
(t,x)\in[0,T)\times(0,+\infty)^d.
\end{equation}
By~\eqref{eq:smooth-candidate-class} and the compactness of $A$,
$\mathcal R(v)$ is continuous and has polynomial growth on every strip
$[0,\tau]\times(0,+\infty)^d$, $\tau<T$.

For the residual representation below, we restrict attention to candidates
satisfying the following terminal-growth condition: there exist constants
$C>0$ and $p\geq0$ such that
\begin{equation}
\label{eq:candidate-terminal-growth}
|\partial_t v(t,x)|
+
\|\Gamma_v(t,x)\|
\leq
\frac{C(1+|x|^p)}{\sqrt{T-t}},
\qquad
(t,x)\in[0,T)\times(0,+\infty)^d.
\end{equation}
This condition is imposed as a sufficient admissibility condition ensuring
the integrability needed to pass to the terminal time. We denote by
\[
\mathfrak V
\coloneqq
\left\{
v\in
\mathcal C^0_{\mathrm{pol}}
\bigl([0,T]\times(0,+\infty)^d;\mathbb R\bigr)
\cap
\mathcal C^{1,2}_{\mathrm{loc,pol}}
\bigl([0,T)\times(0,+\infty)^d;\mathbb R\bigr)
:
v\ \text{satisfies \eqref{eq:candidate-terminal-growth}}
\right\}
\]
the corresponding class of admissible smooth candidates.

\begin{lemma}
\label{lem:candidate-residual-integrability}
Let $v\in\mathfrak V$. Then there exist constants
$C>0$ and $q\geq1$ such that
\begin{equation}
\label{eq:candidate-residual-growth}
|\mathcal R(v)(t,x)|
\leq
\frac{C(1+|x|^q)}{\sqrt{T-t}},
\qquad
(t,x)\in[0,T)\times(0,+\infty)^d.
\end{equation}
Consequently, for every $q'\geq1$ and
$(t,x)\in[0,T)\times(0,+\infty)^d$,
\begin{equation}
\label{eq:candidate-residual-integrability}
\sup_{\alpha\in\mathcal A^t}
\mathbb E^{\mathbb P^t}
\left[
\left(
\int_t^T
e^{-r(s-t)}
\left|
\mathcal R(v)(s,X_s^{\alpha,t,x})
\right|\,ds
\right)^{q'}
\right]
<+\infty.
\end{equation}
\end{lemma}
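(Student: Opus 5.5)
We bound each term of $\mathcal R(v)$ separately and then integrate the bound along the controlled state.

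\emph{Pointwise bound.} From \eqref{eq:uvm-residual},
\[
|\mathcal R(v)(t,x)|\le |\partial_t v(t,x)| + r|x|\,|\nabla_x v(t,x)| + r|v(t,x)| + |H^\star[\Gamma_v(t,x)]| .
\]
Compactness of $A$ gives $K\coloneqq\sup_{a\in A}\|aa^\top\|<\infty$. Hence
\[
|H^\star[\gamma]|\le \sup_{a\in A}|\operatorname{Tr}[\gamma aa^\top]|\le dK\|\gamma\|,
\]
and this term is bounded via \eqref{eq:candidate-terminal-growth}, as is $\partial_t v$. The term $r|v|$ has polynomial growth uniformly on $[0,T]$, since $v\in\mathcal C^0_{\mathrm{pol}}$.

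The delicate term is $x^\top\nabla_x v$. Membership in $\mathcal C^{1,2}_{\mathrm{loc,pol}}$ only gives constants $C_\tau$ that may blow up as $\tau\uparrow T$. The class $\mathfrak V$ contains no direct assumption on $\nabla_x v$ near $T$, so this is the main obstacle.

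To handle it, I would recover the gradient from $v$ and the Hessian by a Taylor (interpolation) argument. Fix $(t,x)$ with $t<T$ and write $\delta\coloneqq T-t$. For a unit vector $e$ and a step $\eta=\epsilon|x|$ with $\epsilon\in(0,1/2]$, so that $x\pm\eta e$ stays in the positive orthant, Taylor's formula gives
\[
|\partial_e v(t,x)|\le \frac{2\sup|v|}{\eta}+\frac{\eta}{2}\sup\|\nabla^2_{xx}v\|,
\]
with suprema over the segment. Using $\nabla^2_{xx}v=2\operatorname{diag}(x)^{-1}\Gamma_v\operatorname{diag}(x)^{-1}$, the Hessian bound involves $1/\min_i x_i^2$. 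This is not uniform near the boundary of the orthant, so I would work coordinatewise in log-variables instead.

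Set $u(y)=v(t,e^{y})$. Then $\partial_{y_i}u=x_i\partial_{x_i}v$, and
\[
\nabla_y^2 u=\operatorname{diag}(x)\nabla^2_{xx}v\operatorname{diag}(x)+\operatorname{diag}(x\odot\nabla_xv)=2\Gamma_v+\operatorname{diag}(\nabla_y u).
\]
Since $x^\top\nabla_xv=\sum_i\partial_{y_i}u$, it suffices to bound $\partial_{y_i}u$. Take a Taylor step $h\in(0,1]$ in $y_i$. Writing $\psi(s)=u(y+se_i)$, we have $\psi''=2\Gamma_v^{ii}+\psi'$, so $(e^{-s}\psi')'=2e^{-s}\Gamma_v^{ii}$. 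Integrating this gives
\[
|\psi'(0)|\le C\bigl(\sup_{|s|\le1}|\psi(s)|+\sup_{|s|\le1}|\Gamma_v^{ii}|\bigr),
\]
by the standard estimate: average $\psi'$ over $[0,1]$, which is controlled by $\psi$, and compare with $\psi'(0)$ through the integrated ODE. Both suprema are evaluated at points $x'$ with $|x'|\le e|x|$. Therefore
\[
|x^\top\nabla_xv(t,x)|\le C(1+|x|^p)(1+\delta^{-1/2})\le C'(1+|x|^p)\delta^{-1/2},
\]
using $\delta\le T$. Collecting the four terms yields \eqref{eq:candidate-residual-growth} with some $q\ge\max(p,m,1)$.

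\emph{Integrability.} Apply Minkowski's inequality in $L^{q'}(\mathbb P^t)$ to the time integral:
\[
\Bigl\|\int_t^T |\mathcal R(v)(s,X_s)|\,ds\Bigr\|_{q'}\le \int_t^T \frac{C\bigl(1+\|\,|X_s|^q\|_{q'}\bigr)}{\sqrt{T-s}}\,ds .
\]
By \eqref{eq:moment-estimate} of \Cref{lem:state-estimates}, $\|\,|X_s|^q\|_{q'}\le C(1+|x|^q)$ uniformly in $s$ and $\alpha$. The right-hand side is therefore at most
\[
C(1+|x|^q)\int_t^T (T-s)^{-1/2}\,ds=2C(1+|x|^q)\sqrt{T-t},
\]
which is finite and independent of $\alpha$. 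The weight $e^{-r(s-t)}\le1$ can be dropped. Taking the supremum over $\alpha\in\mathcal A^t$ gives \eqref{eq:candidate-residual-integrability}.
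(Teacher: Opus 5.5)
Your proof is correct and follows essentially the paper's route: the only delicate term, $x^\top\nabla_xv$, is recovered coordinatewise from the values of $v$ and of $\Gamma_v^{ii}$ by a Taylor step proportional to $x^i$, and the rest is the same Minkowski-plus-moment-estimate argument. The paper steps $x\mapsto x+x^ie_i$ directly in $x$-coordinates, using the remainder weight $(x^i-s)$ and $y^i\ge x^i$, while your log-coordinate ODE $\psi''=\psi'+2\Gamma_v^{ii}$ with a unit step is the same computation. Two small fixes: in your first display write $r|x^\top\nabla_xv|$ rather than $r|x|\,|\nabla_xv|$, since the former is what you actually bound; and the exponent in your gradient estimate should be $\max(p,m)$ rather than $p$.
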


The proof is given in Appendix~\ref{app:technical}.

We now associate with $v$ the a posteriori correction
\begin{equation}
\label{eq:aposteriori-error-term}
\varepsilon(v)(t,x)
\coloneqq
\sup_{\alpha\in\mathcal A^t}
\mathbb E^{\mathbb P^t}\Bigg[
e^{-r(T-t)}
\big(
g(X_T^{\alpha,t,x})
-
v(T,X_T^{\alpha,t,x})
\big)
+
\int_t^T
e^{-r(s-t)}
\mathcal R(v)(s,X_s^{\alpha,t,x})\,ds
\Bigg].
\end{equation}
This quantity is well defined and finite. Indeed, the terminal term is
uniformly integrable over $\alpha\in\mathcal A^t$ by the polynomial
growth of $g$ and $v$ and~\Cref{lem:state-estimates}, while the running
term is absolutely integrable by
\eqref{eq:candidate-residual-integrability}.

\begin{proposition}[Candidate dual bound and residual identity]
\label{prop:candidate-dual-bound}
For every $v\in\mathfrak V$ and
$(t,x)\in[0,T]\times(0,+\infty)^d$,
\begin{equation}
\label{eq:dual-residual-identity}
\mathcal D(\Gamma_v)(t,x)
=
v(t,x)+\varepsilon(v)(t,x).
\end{equation}
Consequently,
\begin{equation}
\label{eq:uvm-duality-bound}
V(t,x)
\leq
v(t,x)+\varepsilon(v)(t,x)
=
\mathcal D(\Gamma_v)(t,x).
\end{equation}
\end{proposition}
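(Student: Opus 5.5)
The plan is to prove the identity pathwise-in-expectation for each fixed control $\alpha\in\mathcal A^t$, and then take the supremum over $\alpha$ on both sides. The inequality~\eqref{eq:uvm-duality-bound} then follows immediately from~\eqref{eq:dual-upper-bound} in \Cref{prop:uvm-convex-duality}, applied with $\Gamma=\Gamma_v\in\mathcal C^0_{\mathrm{loc,pol}}$. The case $t=T$ is trivial, since both sides reduce to $g(x)$ (with $v(T,x)+g(x)-v(T,x)$ on the right), so fix $t<T$, $\alpha\in\mathcal A^t$ and $\tau\in(t,T)$.

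\textbf{Step 1 (It\^o on $[t,\tau]$).} Apply It\^o's formula to $s\mapsto e^{-r(s-t)}v(s,X_s^{\alpha,t,x})$ on $[t,\tau]$, exactly as in the proof of \Cref{prop:uvm-convex-duality}. The drift equals
\[
e^{-r(s-t)}\Bigl(\partial_tv+r(x^\top\nabla_xv-v)+\operatorname{Tr}[\Gamma_v\alpha_s\alpha_s^\top]\Bigr)(s,X_s),
\]
which we rewrite as
\[
e^{-r(s-t)}\Bigl(\mathcal R(v)-\bigl(H^\star[\Gamma_v]-\operatorname{Tr}[\Gamma_v\alpha_s\alpha_s^\top]\bigr)\Bigr)(s,X_s).
\]
The stochastic integral is a true martingale on $[t,\tau]$ by the local polynomial growth of $\nabla_xv$, \Cref{lem:state-estimates} and the boundedness of $A$. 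This yields
\[
\mathbb E\Bigl[e^{-r(\tau-t)}v(\tau,X_\tau)+\int_t^\tau e^{-r(s-t)}\bigl(H^\star[\Gamma_v]-\operatorname{Tr}[\Gamma_v\alpha_s\alpha_s^\top]\bigr)(s,X_s)\,ds\Bigr]
=v(t,x)+\mathbb E\Bigl[\int_t^\tau e^{-r(s-t)}\mathcal R(v)(s,X_s)\,ds\Bigr].
\]

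\textbf{Step 2 (passage $\tau\uparrow T$).} This is the main obstacle, and it is where the class $\mathfrak V$ enters. The terminal term converges: $v$ is continuous on $[0,T]\times(0,+\infty)^d$ with polynomial growth, so \Cref{lem:state-estimates} gives uniform integrability, and the limit is $\mathbb E[e^{-r(T-t)}v(T,X_T)]$. The penalty integral on the left has a nonnegative integrand by~\eqref{eq:duality-gap-nonnegative}, so monotone convergence applies. The residual integral on the right converges by dominated convergence, using the integrable majorant provided by \Cref{lem:candidate-residual-integrability} through~\eqref{eq:candidate-residual-integrability} with $q'=1$.

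In the limit, the left-hand side cannot be $+\infty$, because the right-hand side is finite. Hence the penalty integral up to $T$ is finite. Adding $\mathbb E[e^{-r(T-t)}(g-v(T,\cdot))(X_T)]$ to both sides, which is finite by polynomial growth, we obtain for every $\alpha$
\[
J^{\mathcal D}(\alpha)=v(t,x)+J^{\varepsilon}(\alpha).
\]
Here $J^{\mathcal D}(\alpha)$ and $J^{\varepsilon}(\alpha)$ denote the expectations inside the suprema defining $\mathcal D(\Gamma_v)(t,x)$ and $\varepsilon(v)(t,x)$ respectively.

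\textbf{Step 3.} Taking $\sup_{\alpha\in\mathcal A^t}$ on both sides gives~\eqref{eq:dual-residual-identity}, since $v(t,x)$ does not depend on $\alpha$. Combining this with $V\le\mathcal D(\Gamma_v)$ from~\eqref{eq:dual-upper-bound} gives~\eqref{eq:uvm-duality-bound}.

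The only delicate points are the martingale property, which is only local in time and is handled by working on $[t,\tau]$, and the integrability of $\mathcal R(v)$ near $T$. The latter is exactly what~\eqref{eq:candidate-terminal-growth} and \Cref{lem:candidate-residual-integrability} provide, via the integrable singularity $(T-s)^{-1/2}$.
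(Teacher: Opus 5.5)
Your proposal is correct and follows the paper's proof essentially step for step. The steps are It\^o's formula on $[t,\tau]$, then the passage $\tau\uparrow T$ ($L^1$ convergence of the terminal term, monotone convergence for the penalty, \Cref{lem:candidate-residual-integrability} for the residual), then adding the terminal mismatch, taking the supremum over $\alpha$, and invoking~\eqref{eq:dual-upper-bound}; your explicit remark that the penalty integral is finite because the right-hand side is finite is a small clarification the paper leaves implicit.
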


\begin{proof}
The case $t=T$ follows immediately from the definitions. Fix
$t<T$, $\alpha\in\mathcal A^t$, and $\tau\in(t,T)$. Applying It\^o's
formula to
\[
s\mapsto
e^{-r(s-t)}
v(s,X_s^{\alpha,t,x})
\]
on $[t,\tau]$ gives
\begin{align}
&\mathbb E^{\mathbb P^t}\Bigg[
e^{-r(\tau-t)}
v(\tau,X_\tau^{\alpha,t,x})
\nonumber\\
&\qquad\qquad+
\int_t^\tau
e^{-r(s-t)}
\Big(
H^\star[\Gamma_v(s,X_s^{\alpha,t,x})]
-
\operatorname{Tr}\big[
\Gamma_v(s,X_s^{\alpha,t,x})
\alpha_s\alpha_s^\top
\big]
\Big)\,ds
\Bigg]
\nonumber\\
&\qquad=
v(t,x)
+
\mathbb E^{\mathbb P^t}
\left[
\int_t^\tau
e^{-r(s-t)}
\mathcal R(v)(s,X_s^{\alpha,t,x})\,ds
\right].
\label{eq:candidate-ito-before-terminal}
\end{align}
As in the preceding proof, the local polynomial growth of the derivatives
of $v$ and the moment estimates for the state process ensure that the
stochastic integral has zero expectation.

Letting $\tau\uparrow T$, the continuity and polynomial growth of $v$
give
\[
e^{-r(\tau-t)}
v(\tau,X_\tau^{\alpha,t,x})
\longrightarrow
e^{-r(T-t)}
v(T,X_T^{\alpha,t,x})
\qquad\text{in }L^1(\mathbb P^t).
\]
The duality-gap integral on the left-hand side of
\eqref{eq:candidate-ito-before-terminal} converges by monotone convergence,
whereas~\eqref{eq:candidate-residual-integrability} permits passage to the
limit in the signed residual integral. We obtain
\begin{align*}
&\mathbb E^{\mathbb P^t}\Bigg[
e^{-r(T-t)}
v(T,X_T^{\alpha,t,x})
+
\int_t^T
e^{-r(s-t)}
\Big(
H^\star[\Gamma_v(s,X_s^{\alpha,t,x})]
-
\operatorname{Tr}\big[
\Gamma_v(s,X_s^{\alpha,t,x})
\alpha_s\alpha_s^\top
\big]
\Big)\,ds
\Bigg]
\\
&\qquad=
v(t,x)
+
\mathbb E^{\mathbb P^t}
\left[
\int_t^T
e^{-r(s-t)}
\mathcal R(v)(s,X_s^{\alpha,t,x})\,ds
\right].
\end{align*}
Adding
\[
\mathbb E^{\mathbb P^t}
\left[
e^{-r(T-t)}
\big(
g(X_T^{\alpha,t,x})
-
v(T,X_T^{\alpha,t,x})
\big)
\right]
\]
to both sides gives, for every $\alpha\in\mathcal A^t$,
\begin{align*}
&\mathbb E^{\mathbb P^t}\Bigg[
e^{-r(T-t)}
g(X_T^{\alpha,t,x})
+
\int_t^T
e^{-r(s-t)}
\Big(
H^\star[\Gamma_v(s,X_s^{\alpha,t,x})]
-
\operatorname{Tr}\big[
\Gamma_v(s,X_s^{\alpha,t,x})
\alpha_s\alpha_s^\top
\big]
\Big)\,ds
\Bigg]
\\
&\qquad=
v(t,x)
+
\mathbb E^{\mathbb P^t}\Bigg[
e^{-r(T-t)}
\big(
g(X_T^{\alpha,t,x})
-
v(T,X_T^{\alpha,t,x})
\big)
+
\int_t^T
e^{-r(s-t)}
\mathcal R(v)(s,X_s^{\alpha,t,x})\,ds
\Bigg].
\end{align*}
Taking the supremum over $\alpha\in\mathcal A^t$ yields
\eqref{eq:dual-residual-identity}. By \Cref{prop:uvm-convex-duality},
\[
V(t,x)
\leq
\mathcal D(\Gamma_v)(t,x)
=
v(t,x)+\varepsilon(v)(t,x),
\]
which proves \eqref{eq:uvm-duality-bound}.
\end{proof}

\begin{remark} Assuming $v(T,\cdot)=g(\cdot)$ and taking the supremum inside the integrals in \eqref{eq:aposteriori-error-term} would lead to the dual formulation obtained in
  \cite[Theorem 3.2]{bank2026duality} (the one using $V_2^h$). Let us note however that this dual bound is theoretical and no numerical approximation is proposed in \cite{bank2026duality}. This is a main difference with the dual bounds we exhibit in this work, see next sections. 
\end{remark}

The preceding proposition associates with any admissible smooth candidate
$v\in\mathfrak V$ a dual upper estimate. The same candidate also induces a natural primal lower
estimate. Since $A$ is compact and
\[
a\mapsto
\operatorname{Tr}[\Gamma_v(t,x)aa^\top]
\]
is continuous, the set of maximizers is nonempty for every
$(t,x)\in[0,T)\times(0,+\infty)^d$. By the measurable maximum theorem,
there exists a Borel measurable selector
\begin{equation}
\label{eq:candidate-greedy-selector}
a^v(t,x)
\in
\arg\max_{a\in A}
\operatorname{Tr}[\Gamma_v(t,x)aa^\top].
\end{equation}
By construction,
\begin{equation}
\label{eq:candidate-zero-duality-gap}
H^\star[\Gamma_v(t,x)]
=
\operatorname{Tr}\big[
\Gamma_v(t,x)
a^v(t,x)a^v(t,x)^\top
\big].
\end{equation}
We extend $a^v$ arbitrarily to $\{T\}\times(0,+\infty)^d$ with values
in $A$, this choice has no effect on the dynamics below.

For $(t,x)\in[0,T)\times(0,+\infty)^d$, consider the closed-loop equation
\begin{equation}
\label{eq:candidate-closed-loop}
\begin{cases}
dX_s^{v,t,x}
=
rX_s^{v,t,x}\,ds
+
\operatorname{diag}(X_s^{v,t,x})
a^v(s,X_s^{v,t,x})\,dW_s,
& s\in[t,T],\\
X_t^{v,t,x}=x.
\end{cases}
\end{equation}
By compactness of $A$ and~\Hyp{hyp:ellipticity}, standard weak-existence
results for measurable uniformly nondegenerate diffusions, applied after
the logarithmic transformation, ensure that
\eqref{eq:candidate-closed-loop} admits a weak solution, see
\cite[Chapter~2, Section~6]{Kry80}. 

For any weak solution of \eqref{eq:candidate-closed-loop}, denote the
corresponding state process by $X^{v,t,x}$, its driving Brownian motion by
$W^{v,t,x}$, and the corresponding expectation by
$\mathbb E^{v,t,x}$. Set
\[
\alpha_s^{v,t,x}
\coloneqq
a^v(s,X_s^{v,t,x}),
\qquad
s\in[t,T].
\]
Since $a^v$ is Borel measurable and $X^{v,t,x}$ is continuous and
adapted, $\alpha^{v,t,x}$ is progressively measurable and $A$-valued.
Hence $\alpha^{v,t,x}$ is admissible in the weak formulation of the UVM,
see~\cite{GV02}.

\begin{proposition}[Candidate-induced primal-dual bounds]
\label{prop:candidate-primal-dual}
Let $v\in\mathfrak V$. Then, for every
$(t,x)\in[0,T)\times(0,+\infty)^d$ and every weak solution
$X^{v,t,x}$ of \eqref{eq:candidate-closed-loop},
\begin{equation}
\label{eq:candidate-primal-dual-bracket}
\mathbb E^{v,t,x}
\left[
e^{-r(T-t)}g(X_T^{v,t,x})
\right]
\leq
V(t,x)
\leq
v(t,x)+\varepsilon(v)(t,x)
=
\mathcal D(\Gamma_v)(t,x).
\end{equation}
Moreover, the corresponding primal payoff admits the representation
\begin{align}
\mathbb E^{v,t,x}
\left[
e^{-r(T-t)}g(X_T^{v,t,x})
\right]
=
v(t,x)
+
\mathbb E^{v,t,x}\Bigg[
&e^{-r(T-t)}
\big(
g(X_T^{v,t,x})
-
v(T,X_T^{v,t,x})
\big)
\nonumber\\
&+
\int_t^T
e^{-r(s-t)}
\mathcal R(v)(s,X_s^{v,t,x})\,ds
\Bigg].
\label{eq:candidate-primal-identity}
\end{align}
\end{proposition}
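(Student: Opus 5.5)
The bracket \eqref{eq:candidate-primal-dual-bracket} splits into two parts, and only the lower bound is new. The upper half, $V\le v+\varepsilon(v)=\mathcal D(\Gamma_v)$, is exactly \Cref{prop:candidate-dual-bound}.

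For the lower bound, I will use that the closed-loop control $\alpha^{v,t,x}_s=a^v(s,X^{v,t,x}_s)$ is admissible in the weak formulation. By \cite[Proposition~2.1]{Abb+26}, under \Hyp{hyp:payoff} the weak and strong formulations have the same value $V(t,x)$. So the expected discounted payoff along any weak solution of \eqref{eq:candidate-closed-loop} is at most $V(t,x)$. The one point to check is integrability: $g$ has polynomial growth, and the moment estimate \eqref{eq:moment-estimate} holds for the weak solution. The proof of \Cref{lem:state-estimates} uses only boundedness of $A$ and the linear structure of the SDE, so it applies on any probability space carrying the controlled equation.

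For the identity \eqref{eq:candidate-primal-identity}, I will repeat the Itô argument from the proof of \Cref{prop:candidate-dual-bound}, now on the weak-solution space with driving Brownian motion $W^{v,t,x}$ and control $\alpha^{v,t,x}$.

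1. Apply Itô's formula to $s\mapsto e^{-r(s-t)}v(s,X^{v,t,x}_s)$ on $[t,\tau]$ for $\tau<T$. The local polynomial growth of $\nabla_x v$ and the moment bounds make the stochastic integral a true martingale. This yields \eqref{eq:candidate-ito-before-terminal} with $\alpha=\alpha^{v,t,x}$.
2. By \eqref{eq:candidate-zero-duality-gap}, the duality-gap integrand $H^\star[\Gamma_v]-\operatorname{Tr}[\Gamma_v\alpha\alpha^\top]$ vanishes identically along the closed loop. We are therefore left with
\[
\mathbb E^{v,t,x}\bigl[e^{-r(\tau-t)}v(\tau,X^{v,t,x}_\tau)\bigr]=v(t,x)+\mathbb E^{v,t,x}\Bigl[\int_t^\tau e^{-r(s-t)}\mathcal R(v)(s,X^{v,t,x}_s)\,ds\Bigr].
\]
3. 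Let $\tau\uparrow T$.
   - The left side converges, by continuity of $v$ on $[0,T]\times(0,+\infty)^d$, path continuity, and uniform integrability from polynomial growth.
   - The right side converges by dominated convergence. The dominating function is $\int_t^T e^{-r(s-t)}|\mathcal R(v)(s,X^{v,t,x}_s)|\,ds$.
4. Add $\mathbb E^{v,t,x}[e^{-r(T-t)}(g-v(T,\cdot))(X_T^{v,t,x})]$ to both sides. This gives \eqref{eq:candidate-primal-identity}.

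The main obstacle is the integrability of the residual integral along the closed loop. \Cref{lem:candidate-residual-integrability} is stated for strong controls in $\mathcal A^t$, while $X^{v,t,x}$ is only a weak solution. I would get around this by going back to the pointwise bound \eqref{eq:candidate-residual-growth}, which is deterministic. Combined with the weak-solution moment bound, it gives
\[
\mathbb E^{v,t,x}\int_t^T\frac{C(1+|X^{v,t,x}_s|^q)}{\sqrt{T-s}}\,ds\le C'(1+|x|^q)\int_t^T\frac{ds}{\sqrt{T-s}}<\infty.
\]
So only the moment estimate needs to be transferred to the weak setting. That transfer is routine: apply Burkholder–Davis–Gundy and Gronwall to $\log X$, using the bounded coefficients.
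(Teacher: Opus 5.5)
Your proposal is correct and follows essentially the same route as the paper. The lower bound comes from weak admissibility of $\alpha^{v,t,x}$ together with \cite[Proposition~2.1]{Abb+26}, the upper bound from \Cref{prop:candidate-dual-bound}, and the identity from It\^o's formula on the weak-solution space using \eqref{eq:candidate-zero-duality-gap}, the deterministic bound \eqref{eq:candidate-residual-growth}, and the moment estimates of \Cref{lem:state-estimates} transferred to the weak solution; only your closing remark needs a caveat, since the transfer is simplest done by rerunning that lemma's proof on $X$ itself, whereas working with $\log X$ requires exponential (not merely BDG/Gronwall polynomial) moments of its martingale part, which are available here because its quadratic variation is bounded.
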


\begin{proof}
Fix $v\in\mathfrak V$,
$(t,x)\in[0,T)\times(0,+\infty)^d$, and a weak solution
$X^{v,t,x}$ of \eqref{eq:candidate-closed-loop}. Since
$\alpha^{v,t,x}$ is admissible in the weak formulation of the UVM, the
definition of the weak UVM value and the equivalence between the weak
formulation and the canonical strong formulation, see
\cite[Proposition~2.1]{Abb+26}, give
\[
\mathbb E^{v,t,x}
\left[
e^{-r(T-t)}g(X_T^{v,t,x})
\right]
\leq
V(t,x).
\]
This proves the primal lower bound in
\eqref{eq:candidate-primal-dual-bracket}. The upper bound and the identity
\[
\mathcal D(\Gamma_v)(t,x)
=
v(t,x)+\varepsilon(v)(t,x)
\]
follow from \Cref{prop:candidate-dual-bound}.

It remains to prove \eqref{eq:candidate-primal-identity}. Fix
$\tau\in(t,T)$. Applying It\^o's formula to
\[
s\mapsto
e^{-r(s-t)}v(s,X_s^{v,t,x}),
\qquad
s\in[t,\tau],
\]
and using \eqref{eq:candidate-zero-duality-gap}, yields
\begin{align*}
e^{-r(\tau-t)}v(\tau,X_\tau^{v,t,x})
=
v(t,x)
&+
\int_t^\tau
e^{-r(s-t)}
\mathcal R(v)(s,X_s^{v,t,x})\,ds
\\
&+
\int_t^\tau
e^{-r(s-t)}
\nabla_xv(s,X_s^{v,t,x})^\top
\operatorname{diag}(X_s^{v,t,x})
a^v(s,X_s^{v,t,x})
\,dW_s^{v,t,x}.
\end{align*}
On $[t,\tau]$, the polynomial-growth assumptions on $v$, together with
the standard moment estimates for the state process, imply that the
stochastic integral has zero expectation. Hence
\begin{equation}
\label{eq:candidate-weak-ito-expectation}
\mathbb E^{v,t,x}
\left[
e^{-r(\tau-t)}v(\tau,X_\tau^{v,t,x})
\right]
=
v(t,x)
+
\mathbb E^{v,t,x}
\left[
\int_t^\tau
e^{-r(s-t)}
\mathcal R(v)(s,X_s^{v,t,x})\,ds
\right].
\end{equation}

The moment estimates of \Cref{lem:state-estimates}, with the same proof,
hold for the weak solution considered here. Therefore, by the continuity
and polynomial growth of $v$,
\[
e^{-r(\tau-t)}v(\tau,X_\tau^{v,t,x})
\longrightarrow
e^{-r(T-t)}v(T,X_T^{v,t,x})
\]
in $L^1$ as $\tau\uparrow T$. Moreover,
\eqref{eq:candidate-residual-growth} and the same moment estimates imply
\[
\mathbb E^{v,t,x}
\left[
\int_t^T
e^{-r(s-t)}
\left|
\mathcal R(v)(s,X_s^{v,t,x})
\right|\,ds
\right]
<+\infty.
\]
Letting $\tau\uparrow T$ in
\eqref{eq:candidate-weak-ito-expectation} consequently gives
\[
\mathbb E^{v,t,x}
\left[
e^{-r(T-t)}v(T,X_T^{v,t,x})
\right]
=
v(t,x)
+
\mathbb E^{v,t,x}
\left[
\int_t^T
e^{-r(s-t)}
\mathcal R(v)(s,X_s^{v,t,x})\,ds
\right].
\]
Adding
\[
\mathbb E^{v,t,x}
\left[
e^{-r(T-t)}
\big(
g(X_T^{v,t,x})
-
v(T,X_T^{v,t,x})
\big)
\right]
\]
to both sides proves \eqref{eq:candidate-primal-identity}.
\end{proof}

\section{Discrete-time primal-dual bounds}
\label{sec:numerical-bounds}
\label{subsec:discrete-bounds}

The bounds of \Cref{sec:aposteriori-error-control} are formulated in
continuous time and involve solving stochastic control problems. In numerical
applications, these control problems are replaced by their discrete-time
counterparts. The purpose of this section is to make this passage explicit
and to identify the resulting discrete-time lower and upper bounds. We first
introduce the discrete primal problem, then derive the corresponding dual
constructions for smooth space-time and time-grid candidates, and finally
discuss their numerical evaluation by resimulation. The concrete approximation
methods and numerical experiments are presented in
\Cref{sec:numerical-methodology-experiments}.

For the numerical analysis it is convenient to work in logarithmic
coordinates. For $y\in\mathbb R^d$, the exponential $e^y$ is understood
componentwise, and we set 
\[
W(t,y)
\coloneqq
V(t,e^y).
\]
If
\[
Y_s^{\alpha,t,y}
\coloneqq
\log X_s^{\alpha,t,e^y},
\]
then It\^o's formula gives
\begin{equation}
\label{eq:log-state-dynamics}
dY_s^{\alpha,t,y}
=
b(\alpha_s)\,ds
+
\alpha_s\,dW_s^t,
\qquad
Y_t^{\alpha,t,y}=y,
\end{equation}
where $b(a)
\coloneqq
r\mathbf 1_d
-
\frac12\bigl((aa^\top)^{ii}\bigr)_{i=1}^d$. Consequently,
\begin{equation}
\label{eq:log-primal-value}
W(t,y)
=
\sup_{\alpha\in\mathcal A^t}
\mathbb E^{\mathbb P^t}
\left[
e^{-r(T-t)}
g\bigl(e^{Y_T^{\alpha,t,y}}\bigr)
\right].
\end{equation}

We impose the following additional assumption for the quantitative
discretization results of this section.

\begin{hyp}
\item 
\label{hyp:log-payoff-lipschitz}
The map $y\mapsto g(e^y)$ is globally Lipschitz on $\mathbb R^d$.
\end{hyp}

Fix $N\geq1$ and let
\[
\{0\eqqcolon t_0<t_1<\dots<t_{N-1}<t_N\coloneqq T\},
\qquad
t_n=n\tfrac{T}{N}.
\]
For $0\leq n\leq N-1$, let
$\mathcal A_N^{t_n}\subset\mathcal A^{t_n}$ denote the set of controls of
the form
\[
\alpha_s=a_k,
\qquad
s\in[t_k,t_{k+1}),
\qquad
k=n,\ldots,N-1,
\]
where $a_k$ is $A$-valued and $\mathcal F_{t_k}^{t_n}$-measurable. Since $A$ is compact, the drift $b$ and diffusion coefficient in
\eqref{eq:log-state-dynamics} are bounded, uniformly in $a\in A$, and are
independent of time and space. Thus they satisfy
\cite[Assumptions~(H1)-(H2)]{JPR19}. Moreover, conditionally on
$Y_{t_k}=y$ and $a_k=a$, the one-step transition is given exactly by
\[
Y_{t_{k+1}}
=
y
+
b(a)\tfrac{T}{N}
+
a\sqrt{\tfrac{T}{N}}\,\xi_k,
\]
where $\xi_k$ is a standard $d$-dimensional Gaussian random variable.
Hence restricting the control to the time grid introduces no
discretization of the state dynamics.

\subsection{The primal problem}

Define
\[
\mathcal W_n^\star(y)
\coloneqq
\sup_{\alpha\in\mathcal A_N^{t_n}}
\mathbb E^{\mathbb P^{t_n}}
\left[
e^{-r(T-t_n)}
g\bigl(e^{Y_T^{\alpha,t_n,y}}\bigr)
\right].
\] 

\begin{remark} The discrete value function $\mathcal W_n^\star$ is simply the logarithmic-coordinate version of the discrete UVM value function $\mathcal V_n^\star$ considered in~\cite{Abb+26}, namely $\mathcal W_n^\star(y)=\mathcal V_n^\star(e^y)$. \end{remark}

Under \Hyp{hyp:log-payoff-lipschitz}, the terminal reward in
\eqref{eq:log-primal-value} is globally Lipschitz. Since the coefficients
of \eqref{eq:log-state-dynamics} are bounded,
\cite[Theorem~2.1]{JPR19} applies directly. The deterministic discount
factor can be absorbed into the terminal reward and does not affect the
assumptions of that theorem. Hence there exists a constant $C>0$,
independent of $N$, $n$, and $y$, such that
\begin{equation}
\label{eq:discrete-log-primal-bound}
0
\leq
W(t_n,y)-\mathcal W_n^\star(y)
\leq
C\bigl(\tfrac{T}{N}\bigr)^{1/4},
\qquad
y\in\mathbb R^d.
\end{equation}
Thus $\mathcal W_n^\star\leq W(t_n,\cdot)$ is a genuine lower bound, and
\eqref{eq:discrete-log-primal-bound} quantifies the error induced by
restricting the controls to $\mathcal A_N^{t_n}$.

\subsection{Smooth space-time candidates and discrete dual bounds}
\label{subsubsec:smooth-space-time-candidates}

We next consider continuous-time candidates parameterized directly in
logarithmic coordinates. Let
\begin{equation}
\label{eq:log-numerical-candidate}
w
\in
\mathcal C_b^{2,3}
\bigl([0,T]\times\mathbb R^d;\mathbb R\bigr).
\end{equation}

For $(t,y)\in[0,T]\times\mathbb R^d$, define the log-coordinate cash gamma
by
\[
\Gamma_w^{\log}(t,y)
\coloneqq
\frac12
\Big(
\nabla_{yy}^2w(t,y)
-
\operatorname{diag}\bigl(\nabla_yw(t,y)\bigr)
\Big).
\]

For $(t,y,a)\in[0,T]\times\mathbb R^d\times A$, set
\[
F(w)(t,y,a)
\coloneqq
H^\star[\Gamma_w^{\log}(t,y)]
-
\operatorname{Tr}
\left[
\Gamma_w^{\log}(t,y)aa^\top
\right],
\]
and define the log-coordinate residual by
\[
\mathcal R^{\log}(w)(t,y)
\coloneqq
\partial_tw(t,y)
+
r\bigl(
\mathbf 1_d^\top\nabla_yw(t,y)-w(t,y)
\bigr)
+
H^\star[\Gamma_w^{\log}(t,y)].
\]

The corresponding continuous-time dual control problem is
\begin{equation}
\mathcal D^{\log}(w)(t,y)
\coloneqq
\sup_{\alpha\in\mathcal A^t}
\mathbb E^{\mathbb P^t}\Bigg[
e^{-r(T-t)}
g\bigl(e^{Y_T^{\alpha,t,y}}\bigr)
+
\int_t^T
e^{-r(s-t)}
F(w)
\bigl(
s,Y_s^{\alpha,t,y},\alpha_s
\bigr)\,ds
\Bigg],
\label{eq:log-direct-dual}
\end{equation}
while
\begin{align}
\varepsilon^{\log}(w)(t,y)
\coloneqq
\sup_{\alpha\in\mathcal A^t}
\mathbb E^{\mathbb P^t}\Bigg[
&e^{-r(T-t)}
\left(
g\bigl(e^{Y_T^{\alpha,t,y}}\bigr)
-
w\bigl(T,Y_T^{\alpha,t,y}\bigr)
\right)
\nonumber\\
&+
\int_t^T
e^{-r(s-t)}
\mathcal R^{\log}(w)
\bigl(
s,Y_s^{\alpha,t,y}
\bigr)\,ds
\Bigg].
\label{eq:log-residual-control}
\end{align}

Since $F(w)\geq0$ by definition of $H^\star$, and since It\^o's formula
applied to $e^{-r(s-t)}w(s,Y_s^{\alpha,t,y})$ yields
$\mathcal D^{\log}(w)=w+\varepsilon^{\log}(w)$, we have
\begin{equation}
\label{eq:log-continuous-upper-bounds}
W(t,y)
\leq
\mathcal D^{\log}(w)(t,y)
=
w(t,y)+\varepsilon^{\log}(w)(t,y).
\end{equation}

We next verify that the two control problems satisfy the assumptions of
\cite[Theorem~2.1]{JPR19}. Let \Hyp{hyp:log-payoff-lipschitz} hold and
let $w$ satisfy~\eqref{eq:log-numerical-candidate}. Since $A$ is compact,
\[
|H^\star[\gamma]-H^\star[\widetilde\gamma]|
\leq
C\|\gamma-\widetilde\gamma\|,
\qquad
\gamma,\widetilde\gamma\in\mathbb S^d.
\]
Moreover, $\Gamma_w^{\log}$, its spatial gradient, and its time derivative
are bounded. Consequently, $F(w)$ and $\mathcal R^{\log}(w)$ are bounded,
and there exists $C>0$ such that, for all $t,s\in[0,T]$,
$y,z\in\mathbb R^d$, and $a\in A$,
\[
|F(w)(t,y,a)-F(w)(s,z,a)|
+
|\mathcal R^{\log}(w)(t,y)-\mathcal R^{\log}(w)(s,z)|
\leq
C\bigl(
|y-z|+|t-s|^{1/2}
\bigr).
\]
Furthermore, by \Hyp{hyp:log-payoff-lipschitz} and the boundedness of
$\nabla_yw(T,\cdot)$, the map $y\mapsto g(e^y)-w(T,y)$ is globally Lipschitz on $\mathbb R^d$.

We can now define the fully discrete versions of
\eqref{eq:log-direct-dual} and \eqref{eq:log-residual-control}. Set
\begin{equation}
\label{eq:fully-discrete-log-direct-dual}
\mathcal D_N^{\log}(w)(t_n,y)
\coloneqq
\sup_{\alpha\in\mathcal A_N^{t_n}}
\mathbb E^{\mathbb P^{t_n}}\Bigg[
e^{-r(T-t_n)}
g\bigl(e^{Y_T^{\alpha,t_n,y}}\bigr)
+
\frac{T}{N}
\sum_{k=n}^{N-1}
e^{-r(t_k-t_n)}
F(w)
\bigl(
t_k,Y_{t_k}^{\alpha,t_n,y},a_k
\bigr)
\Bigg],
\end{equation}
and
\begin{align}
\varepsilon_N^{\log}(w)(t_n,y)
\coloneqq
\sup_{\alpha\in\mathcal A_N^{t_n}}
\mathbb E^{\mathbb P^{t_n}}\Bigg[
&e^{-r(T-t_n)}
\left(
g\bigl(e^{Y_T^{\alpha,t_n,y}}\bigr)
-
w\bigl(T,Y_T^{\alpha,t_n,y}\bigr)
\right)
\nonumber\\
&+
\frac{T}{N}
\sum_{k=n}^{N-1}
e^{-r(t_k-t_n)}
\mathcal R^{\log}(w)
\bigl(
t_k,Y_{t_k}^{\alpha,t_n,y}
\bigr)
\Bigg].
\label{eq:fully-discrete-log-residual}
\end{align}

There is a useful distinction between the two fully discrete quantities
above. Since $F(w)\geq0$, the discrete direct-dual problem already
dominates the discrete primal value:
\[
\mathcal W_n^\star(y)
\leq
\mathcal D_N^{\log}(w)(t_n,y).
\]
For the residual formulation, an additional error arises from replacing
the time integral of the residual by its left-point approximation. The
regularity established above yields an error of order $(T/N)^{1/2}$.
Combining these discrete-level estimates with
\eqref{eq:discrete-log-primal-bound} gives the following result.

\begin{proposition}
\label{prop:log-continuous-candidate-discrete-upper-bounds}
Let \Hyp{hyp:log-payoff-lipschitz} hold and let $w$ satisfy
\eqref{eq:log-numerical-candidate}. Then there exists a constant $C>0$,
depending on the model parameters and on the fixed candidate $w$, but
independent of $N$, $n$, and $y$, such that, for
$n=0,\ldots,N-1$ and $y\in\mathbb R^d$,
\begin{align}
\mathcal W_n^\star(y)
&\leq
w(t_n,y)
+
\varepsilon_N^{\log}(w)(t_n,y)
+
C\bigl(\tfrac{T}{N}\bigr)^{1/2},
\label{eq:fully-discrete-log-residual-discrete-upper-bound}
\\
W(t_n,y)
&\leq
\mathcal D_N^{\log}(w)(t_n,y)
+
C\bigl(\tfrac{T}{N}\bigr)^{1/4},
\label{eq:fully-discrete-log-direct-upper-bound}
\\
W(t_n,y)
&\leq
w(t_n,y)
+
\varepsilon_N^{\log}(w)(t_n,y)
+
C\bigl(\tfrac{T}{N}\bigr)^{1/4}.
\label{eq:fully-discrete-log-residual-upper-bound}
\end{align}
\end{proposition}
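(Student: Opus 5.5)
Our plan is to prove the three inequalities in the order (1), (2), (3), with (3) following from (1) together with \eqref{eq:discrete-log-primal-bound}.

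\textbf{Step 1: the discrete residual bound.}
Fix $\alpha\in\mathcal A_N^{t_n}$ with grid values $(a_k)$. Write $Y=Y^{\alpha,t_n,y}$, $h=T/N$ and $\delta_k=e^{-r(t_k-t_n)}$.

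Apply It\^o's formula to $s\mapsto e^{-r(s-t_n)}w(s,Y_s)$ on each interval $[t_k,t_{k+1}]$. Since $w\in\mathcal C_b^{2,3}$ and $A$ is compact, the stochastic integral is a true martingale. On that interval the generator acting on $w$ equals $\mathcal R^{\log}(w)(s,Y_s)-F(w)(s,Y_s,a_k)$. Summing over $k$ gives
\[
\mathbb E\bigl[e^{-r(T-t_n)}w(T,Y_T)\bigr]
=
w(t_n,y)
+
\mathbb E\Bigl[\int_{t_n}^T e^{-r(s-t_n)}\bigl(\mathcal R^{\log}(w)-F(w)\bigr)(s,Y_s,\alpha_s)\,ds\Bigr].
\]
Add $\mathbb E[e^{-r(T-t_n)}g(e^{Y_T})]$ to both sides and use $F(w)\ge0$. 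This yields
\[
\mathbb E\bigl[e^{-r(T-t_n)}g(e^{Y_T})\bigr]
\le
w(t_n,y)
+
\mathbb E\Bigl[e^{-r(T-t_n)}\bigl(g(e^{Y_T})-w(T,Y_T)\bigr)
+
\int_{t_n}^T e^{-r(s-t_n)}\mathcal R^{\log}(w)(s,Y_s)\,ds\Bigr].
\]
We then replace the integral by the left-point sum $h\sum_{k}\delta_k\,\mathcal R^{\log}(w)(t_k,Y_{t_k})$. The error has three sources:
\begin{itemize}
\item the $C(|y-z|+|t-s|^{1/2})$ regularity of $\mathcal R^{\log}(w)$ established above;
\item the increment bound $\mathbb E|Y_s-Y_{t_k}|\le C h^{1/2}$, which holds because the coefficients are bounded;
\item the variation of the discount factor, which is $O(h)$ since $\mathcal R^{\log}(w)$ is bounded.
\end{itemize}
Together these bound the error by $C h^{1/2}$, uniformly in $\alpha$, $n$ and $y$. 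Taking the supremum over $\alpha\in\mathcal A_N^{t_n}$ gives \eqref{eq:fully-discrete-log-residual-discrete-upper-bound}.

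\textbf{Step 2: the direct dual bound.}
This follows from the observation preceding the statement. Since $F(w)\ge0$, each admissible $\alpha\in\mathcal A_N^{t_n}$ satisfies
\[
\mathbb E\bigl[e^{-r(T-t_n)}g(e^{Y_T})\bigr]
\le
\mathbb E\Bigl[e^{-r(T-t_n)}g(e^{Y_T})+h\sum_k\delta_k F(w)(t_k,Y_{t_k},a_k)\Bigr].
\]
Taking suprema gives $\mathcal W_n^\star(y)\le\mathcal D_N^{\log}(w)(t_n,y)$. Combining this with \eqref{eq:discrete-log-primal-bound}, which reads $W(t_n,y)\le\mathcal W_n^\star(y)+Ch^{1/4}$, yields \eqref{eq:fully-discrete-log-direct-upper-bound}.

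\textbf{Step 3: the residual bound for $W$.}
Combine \eqref{eq:discrete-log-primal-bound} with Step 1, and use $h^{1/2}\le T^{1/4}h^{1/4}$ to absorb the $h^{1/2}$ term into the $h^{1/4}$ one. This gives \eqref{eq:fully-discrete-log-residual-upper-bound}.

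\textbf{Main obstacle.}
The only delicate point is Step 1: we must check that the left-point quadrature error is bounded uniformly over controls. This reduces to the H\"older-in-time, Lipschitz-in-space estimate for $\mathcal R^{\log}(w)$ stated before the proposition, together with the increment estimate of \Cref{lem:state-estimates}, transported to log coordinates where it holds with constants free of $|y|$.
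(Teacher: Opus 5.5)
Your proposal is correct and follows the paper's proof essentially step for step. The direct bound comes from $F(w)\ge0$ together with \eqref{eq:discrete-log-primal-bound}. The residual bound comes from It\^o's formula, dropping $-F(w)$, and a left-point quadrature estimate of order $(T/N)^{1/2}$ that is uniform over controls. The third bound follows by combining these and absorbing the $(T/N)^{1/2}$ term into the $(T/N)^{1/4}$ term. One cosmetic slip: after It\^o's formula, the quantity to add to both sides is $\mathbb E\bigl[e^{-r(T-t_n)}\bigl(g(e^{Y_T})-w(T,Y_T)\bigr)\bigr]$, not $\mathbb E\bigl[e^{-r(T-t_n)}g(e^{Y_T})\bigr]$.
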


\begin{proof}
Since $F(w)\geq0$, the definition
\eqref{eq:fully-discrete-log-direct-dual} immediately gives
\[
\mathcal W_n^\star(y)
\leq
\mathcal D_N^{\log}(w)(t_n,y).
\]
Together with \eqref{eq:discrete-log-primal-bound}, this proves
\eqref{eq:fully-discrete-log-direct-upper-bound}.

We next consider the residual formulation. Fix
$\alpha\in\mathcal A_N^{t_n}$ and write
$Y_s=Y_s^{\alpha,t_n,y}$. Applying It\^o's formula to
$e^{-r(s-t_n)}w(s,Y_s)$ gives
\begin{align*}
&
\mathbb E^{\mathbb P^{t_n}}
\left[
e^{-r(T-t_n)}g(e^{Y_T})
\right]
\\
&\quad=
w(t_n,y)
+
\mathbb E^{\mathbb P^{t_n}}
\Bigg[
e^{-r(T-t_n)}
\bigl(
g(e^{Y_T})-w(T,Y_T)
\bigr)
\\
&\hspace{24mm}
+
\int_{t_n}^T
e^{-r(s-t_n)}
\left(
\mathcal R^{\log}(w)(s,Y_s)
-
F(w)(s,Y_s,\alpha_s)
\right)\,ds
\Bigg].
\end{align*}
Since $F(w)\geq0$,
\begin{align*}
&
\mathbb E^{\mathbb P^{t_n}}
\left[
e^{-r(T-t_n)}g(e^{Y_T})
\right]
\\
&\quad\leq
w(t_n,y)
+
\mathbb E^{\mathbb P^{t_n}}
\Bigg[
e^{-r(T-t_n)}
\bigl(
g(e^{Y_T})-w(T,Y_T)
\bigr)
\\
&\hspace{24mm}
+
\int_{t_n}^T
e^{-r(s-t_n)}
\mathcal R^{\log}(w)(s,Y_s)\,ds
\Bigg].
\end{align*}

It remains to replace the residual integral by its left-point
approximation. Since the coefficients of
\eqref{eq:log-state-dynamics} are bounded,
\[
\mathbb E^{\mathbb P^{t_n}}
\left[
\left|
Y_s-Y_{t_k}
\right|
\right]
\leq
C|s-t_k|^{1/2},
\qquad
s\in[t_k,t_{k+1}],
\]
uniformly over
$\alpha\in\mathcal A_N^{t_n}$. Using the regularity of
$\mathcal R^{\log}(w)$ established above, together with the boundedness
of $\mathcal R^{\log}(w)$ to control the variation of the discount
factor, we obtain
\[
\sup_{\alpha\in\mathcal A_N^{t_n}}
\mathbb E^{\mathbb P^{t_n}}
\left[
\left|
\int_{t_n}^T
e^{-r(s-t_n)}
\mathcal R^{\log}(w)(s,Y_s)\,ds
-
\frac{T}{N}
\sum_{k=n}^{N-1}
e^{-r(t_k-t_n)}
\mathcal R^{\log}(w)(t_k,Y_{t_k})
\right|
\right]
\leq
C\bigl(\tfrac{T}{N}\bigr)^{1/2}.
\]
Taking the supremum over
$\alpha\in\mathcal A_N^{t_n}$ therefore yields
\eqref{eq:fully-discrete-log-residual-discrete-upper-bound}.

Finally, combining
\eqref{eq:fully-discrete-log-residual-discrete-upper-bound} with
\eqref{eq:discrete-log-primal-bound} gives
\[
W(t_n,y)
\leq
w(t_n,y)
+
\varepsilon_N^{\log}(w)(t_n,y)
+
C
\left[
\bigl(\tfrac{T}{N}\bigr)^{1/4}
+
\bigl(\tfrac{T}{N}\bigr)^{1/2}
\right].
\]
Since $T$ is fixed and $N\geq1$, the second term can be absorbed into the
first, which proves
\eqref{eq:fully-discrete-log-residual-upper-bound}.
\end{proof}

The regularity condition~\eqref{eq:log-numerical-candidate} is naturally
satisfied by the neural-network candidates considered below. In
particular, a finite feedforward network with $\tanh$ activation in the
hidden layers and an affine output layer belongs to
$\mathcal C_b^{2,3}$ for fixed network parameters. Thus, when the network
is parameterized directly in the log-price variables, the regularity
required above is automatic.

\begin{remark}
The discrete primal values are monotone under nested refinements of the
control grid. In particular, if the grid with $M$ time steps refines the
grid with $N$ time steps, then
\[
\mathcal A_N^{t_n}
\subset
\mathcal A_M^{t_m}
\]
whenever $t_n=t_m$, and therefore
\[
\mathcal W_n^\star(y)
\leq
\mathcal W_m^\star(y).
\]
For example, along dyadic refinements $N=2^{N'}$, the values at the
initial time form an increasing sequence:
\[
\mathcal W_{0,2^{N'}}^\star(y)
\leq
\mathcal W_{0,2^{N'+1}}^\star(y).
\]
Together with \eqref{eq:discrete-log-primal-bound}, this yields
\[
\mathcal W_{0,2^{N'}}^\star(y)
\uparrow
W(0,y)
\qquad\text{as }N'\to\infty.
\]

No analogous monotonicity is available in general for
$\mathcal D_N^{\log}(w)$ or $\varepsilon_N^{\log}(w)$, even for a fixed
smooth candidate $w$ and along nested refinements. Indeed, refining the
grid enlarges the admissible control class, but at the same time changes
the left-point approximation of the running integral. Since these
quadrature approximations are not monotone in general, the resulting
dual quantities need not be ordered from one grid to the next.
\end{remark}

\subsection{Time-grid candidates}
\label{subsubsec:time-grid-candidates}

The previous construction assumes that the candidate is a smooth function
of both time and space. This is not the case for the critics produced by
the SPG algorithm \cite{Abb+26}, which form a family of spatial functions indexed by the
time grid. We therefore consider a sequence
\[
(w_k)_{k=0}^N,
\qquad
w_k:\mathbb R^d\to\mathbb R,
\]
parameterized directly in logarithmic coordinates, with the convention
\[
w_N(y)=g(e^y).
\]
For $k=0,\ldots,N-1$, we assume that $w_k\in\mathcal C^2(\mathbb R^d)$,
has at most linear growth, and has bounded first and second derivatives.
No differentiability is imposed on $w_N$.

For $k=0,\ldots,N-1$, define the forward difference
\[
D_t^+w_k(y)
\coloneqq
(\tfrac{T}{N})^{-1}
\bigl(
w_{k+1}(y)-w_k(y)
\bigr),
\]
and
\[
\Gamma_k^{\log}(y)
\coloneqq
\frac12
\left(
D^2w_k(y)
-
\operatorname{diag}\bigl(\nabla w_k(y)\bigr)
\right).
\]
The residual evaluated numerically at time $t_k$ is then
\begin{equation}
\label{eq:time-grid-residual}
\widehat{\mathcal R}_k^{\log}(w)(y)
\coloneqq
D_t^+w_k(y)
+
r\bigl(
\mathbf 1_d^\top\nabla w_k(y)-w_k(y)
\bigr)
+
H^\star[\Gamma_k^{\log}(y)].
\end{equation}
Notice that $w_N=g\circ\exp$ enters
\eqref{eq:time-grid-residual} only through the value
$D_t^+w_{N-1}$ and is never differentiated. In analogy with \eqref{eq:fully-discrete-log-residual}, define
\begin{equation}
\label{eq:time-grid-residual-control}
\widehat\varepsilon_N(w)(t_n,y)
\coloneqq
\sup_{\alpha\in\mathcal A_N^{t_n}}
\mathbb E^{\mathbb P^{t_n}}
\left[
\frac{T}{N}
\sum_{k=n}^{N-1}
e^{-r(t_k-t_n)}
\widehat{\mathcal R}_k^{\log}(w)
\bigl(Y_{t_k}^{\alpha,t_n,y}\bigr)
\right].
\end{equation}

To derive an upper bound for the discrete-control value
$\mathcal W_n^\star$ in terms of the residual evaluated numerically, we
first fix $\alpha\in\mathcal A_N^{t_n}$ and relate its discounted payoff
to the time-grid candidate. Write
$Y_s=Y_s^{\alpha,t_n,y}$. Since $w_N=g\circ\exp$, the discounted payoff
can be written as
\[
\mathbb E^{\mathbb P^{t_n}}
\left[
e^{-r(T-t_n)}g(e^{Y_T})
\right]
=
\mathbb E^{\mathbb P^{t_n}}
\left[
e^{-r(T-t_n)}w_N(Y_T)
\right].
\]
Hence, in order to compare this payoff with the numerical residual, it is
natural to start from the difference
\[
\mathbb E^{\mathbb P^{t_n}}
\left[
e^{-r(T-t_n)}w_N(Y_T)
\right]
-
w_n(y).
\]

This difference can be expanded exactly by telescoping over the time
grid:
\[
e^{-r(T-t_n)}w_N(Y_T)-w_n(y)
=\sum_{k=n}^{N-1}
\left[
e^{-r(t_{k+1}-t_n)}
w_{k+1}(Y_{t_{k+1}})
-
e^{-r(t_k-t_n)}
w_k(Y_{t_k})
\right].
\]
Thus the terminal payoff is linked to the initial value $w_n(y)$ through
the successive one-step changes of the time-grid candidate along the
controlled state process.

Consider one of these one-step changes. We decompose it as
\begin{align*}
&
e^{-r(t_{k+1}-t_n)}
w_{k+1}(Y_{t_{k+1}})
-
e^{-r(t_k-t_n)}
w_k(Y_{t_k})
\\
&\quad=
e^{-r(t_{k+1}-t_n)}
\bigl(
w_{k+1}-w_k
\bigr)(Y_{t_{k+1}})
+
\left[
e^{-r(t_{k+1}-t_n)}
w_k(Y_{t_{k+1}})
-
e^{-r(t_k-t_n)}
w_k(Y_{t_k})
\right].
\end{align*}
The first term represents the change from the critic $w_k$ to the next
critic $w_{k+1}$, evaluated at the state reached at time $t_{k+1}$.
The second term represents the evolution of the state over
$[t_k,t_{k+1}]$ while the critic $w_k$ is kept fixed. The second term can be evaluated exactly by It\^o's formula. For
$a\in A$, set
\[
L_k^a w_k(y)
\coloneqq
r\bigl(
\mathbf 1_d^\top\nabla w_k(y)-w_k(y)
\bigr)
+
\operatorname{Tr}
\left[
\Gamma_k^{\log}(y)aa^\top
\right].
\]
Since the control is equal to $a_k$ on $[t_k,t_{k+1})$,
\[
\mathbb E^{\mathbb P^{t_n}}
\left[
e^{-r(t_{k+1}-t_n)}
w_k(Y_{t_{k+1}})
-
e^{-r(t_k-t_n)}
w_k(Y_{t_k})
\right]
=\mathbb E^{\mathbb P^{t_n}}
\left[
\int_{t_k}^{t_{k+1}}
e^{-r(s-t_n)}
L_k^{a_k}w_k(Y_s)\,ds
\right].
\]
After summing over $k=n,\ldots,N-1$, we therefore obtain the exact
identity
\begin{equation}
\label{eq:time-grid-exact-representation}
\begin{aligned}
&
\mathbb E^{\mathbb P^{t_n}}
\left[
e^{-r(T-t_n)}g(e^{Y_T})
\right]
-
w_n(y)
\\
&\quad=
\mathbb E^{\mathbb P^{t_n}}
\Bigg[
\sum_{k=n}^{N-1}
e^{-r(t_{k+1}-t_n)}
(w_{k+1}-w_k)(Y_{t_{k+1}})
+
\sum_{k=n}^{N-1}
\int_{t_k}^{t_{k+1}}
e^{-r(s-t_n)}
L_k^{a_k}w_k(Y_s)\,ds
\Bigg].
\end{aligned}
\end{equation}
This is the exact representation that one would like to relate to the
residual evaluated numerically. Before replacing the controlled
generator by the supremum $H^\star$, the corresponding numerical
quantity is
\[
\frac{T}{N}
\sum_{k=n}^{N-1}
e^{-r(t_k-t_n)}
\left[
D_t^+w_k(Y_{t_k})
+
L_k^{a_k}w_k(Y_{t_k})
\right].
\]
Using
\[
\frac{T}{N}D_t^+w_k
=
w_{k+1}-w_k,
\]
this becomes
\begin{equation}
\label{eq:time-grid-left-point-representation}
\sum_{k=n}^{N-1}
e^{-r(t_k-t_n)}
(w_{k+1}-w_k)(Y_{t_k})
+\frac{T}{N}
\sum_{k=n}^{N-1}
e^{-r(t_k-t_n)}
L_k^{a_k}w_k(Y_{t_k}).
\end{equation}

The comparison between
\eqref{eq:time-grid-exact-representation} and
\eqref{eq:time-grid-left-point-representation} is now explicit.
There are two differences. First, the exact telescoping identity contains
\[
e^{-r(t_{k+1}-t_n)}
(w_{k+1}-w_k)(Y_{t_{k+1}}),
\]
whereas the forward-difference residual uses
\[
e^{-r(t_k-t_n)}
(w_{k+1}-w_k)(Y_{t_k}).
\]
The exact change from $w_k$ to $w_{k+1}$ is therefore evaluated at the
right endpoint of the interval, while the numerical residual evaluates
the same change at the left endpoint. Second, the exact identity contains the generator contribution
\[
\int_{t_k}^{t_{k+1}}
e^{-r(s-t_n)}
L_k^{a_k}w_k(Y_s)\,ds,
\]
whereas the numerical residual replaces it by the left-point
approximation
\[
\frac{T}{N}
e^{-r(t_k-t_n)}
L_k^{a_k}w_k(Y_{t_k}).
\]
Hence the passage from the exact payoff identity to the quantity used
numerically consists of the two replacements
\[
e^{-r(t_k-t_n)}
(w_{k+1}-w_k)(Y_{t_k})
\quad\text{instead of}\quad
e^{-r(t_{k+1}-t_n)}
(w_{k+1}-w_k)(Y_{t_{k+1}}),
\]
and
\[
\frac{T}{N}
e^{-r(t_k-t_n)}
L_k^{a_k}w_k(Y_{t_k})
\quad\text{instead of}\quad
\int_{t_k}^{t_{k+1}}
e^{-r(s-t_n)}
L_k^{a_k}w_k(Y_s)\,ds.
\]

The role of the consistency defect~\eqref{eq:time-grid-consistency-defect} introduced below is precisely to
measure the error in this approximation step. Once this error has been
accounted for, the inequality
\[
\operatorname{Tr}
\left[
\Gamma_k^{\log}(y)a_ka_k^\top
\right]
\leq
H^\star[\Gamma_k^{\log}(y)]
\]
turns the last expression into the numerical residual
$\widehat{\mathcal R}_k^{\log}(w)$ and therefore yields the desired upper
bound.

We define the consistency defect by
\begin{align}
\mathfrak E_N(w)(t_n,y)
\coloneqq
\sup_{\alpha\in\mathcal A_N^{t_n}}
\mathbb E^{\mathbb P^{t_n}}
\Bigg[
&
\left|
\sum_{k=n}^{N-1}
\Big(
e^{-r(t_{k+1}-t_n)}
(w_{k+1}-w_k)
\bigl(Y_{t_{k+1}}^{\alpha,t_n,y}\bigr)
\right.
\nonumber\\[-1mm]
&\hspace{33mm}\left.
-
e^{-r(t_k-t_n)}
(w_{k+1}-w_k)
\bigl(Y_{t_k}^{\alpha,t_n,y}\bigr)
\Big)
\right|
\nonumber\\
&+
\left|
\sum_{k=n}^{N-1}
\int_{t_k}^{t_{k+1}}
e^{-r(s-t_n)}
L_k^{a_k}w_k
\bigl(Y_s^{\alpha,t_n,y}\bigr)\,ds
\right.
\nonumber\\[-1mm]
&\hspace{18mm}\left.
-
\frac{T}{N}
\sum_{k=n}^{N-1}
e^{-r(t_k-t_n)}
L_k^{a_k}w_k
\bigl(Y_{t_k}^{\alpha,t_n,y}\bigr)
\right|
\Bigg].
\label{eq:time-grid-consistency-defect}
\end{align}
Thus $\mathfrak E_N(w)$ is not an additional residual. It measures the
difference between the exact identity linking the terminal payoff to the
time-grid candidate and the left-point approximation of that identity
used in the numerical residual. No temporal regularity is imposed on the
sequence $(w_k)_{k=0}^N$, and we therefore do not assign a convergence
rate to this defect.

The following result gives the corresponding a posteriori bound.

\begin{proposition}
\label{prop:time-grid-candidate-upper-bound}
Let \Hyp{hyp:log-payoff-lipschitz} hold and let
$(w_k)_{k=0}^N$ satisfy the assumptions above. Then, for
$n=0,\ldots,N-1$ and $y\in\mathbb R^d$,
\begin{align}
\mathcal W_n^\star(y)
&\leq
w_n(y)
+
\widehat\varepsilon_N(w)(t_n,y)
+
\mathfrak E_N(w)(t_n,y),
\label{eq:time-grid-discrete-upper-bound}
\\
W(t_n,y)
&\leq
w_n(y)
+
\widehat\varepsilon_N(w)(t_n,y)
+
\mathfrak E_N(w)(t_n,y)
+
C\bigl(\tfrac{T}{N}\bigr)^{1/4},
\label{eq:time-grid-continuous-upper-bound}
\end{align}
where the constant $C>0$ in
\eqref{eq:time-grid-continuous-upper-bound} is the one appearing in
\eqref{eq:discrete-log-primal-bound} and is independent of the candidate
sequence and of $N$.
\end{proposition}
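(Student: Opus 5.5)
We fix $n$, $y$ and an arbitrary discrete control $\alpha\in\mathcal A_N^{t_n}$, write $Y_s=Y_s^{\alpha,t_n,y}$, and start from the exact identity \eqref{eq:time-grid-exact-representation}.

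The first step is to justify that identity rigorously. On each interval $[t_k,t_{k+1}]$ the control is frozen at the $\mathcal F_{t_k}^{t_n}$-measurable value $a_k$, and $w_k\in\mathcal C^2$ has bounded first and second derivatives and linear growth. We apply It\^o's formula to $s\mapsto e^{-r(s-t_n)}w_k(Y_s)$ on $[t_k,t_{k+1}]$. With the log-dynamics \eqref{eq:log-state-dynamics}, the drift term is
\[
\nabla w_k^\top b(a_k)+\tfrac12\operatorname{Tr}[D^2w_k\,a_ka_k^\top]-rw_k .
\]
This equals $L_k^{a_k}w_k$, since
\[
-\tfrac12\nabla w_k^\top\bigl((a_ka_k^\top)^{ii}\bigr)_i=-\tfrac12\operatorname{Tr}[\operatorname{diag}(\nabla w_k)a_ka_k^\top].
\]
The stochastic integrand $e^{-r(s-t_n)}\nabla w_k(Y_s)^\top a_k$ is bounded, so the stochastic integral is a true martingale with zero expectation. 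All terms are integrable: $w_k$ has linear growth, $Y$ has moments of all orders because its coefficients are bounded, and $w_N(Y_T)=g(e^{Y_T})$ has linear growth in $Y_T$ by \Hyp{hyp:log-payoff-lipschitz}. Telescoping over $k$ then gives \eqref{eq:time-grid-exact-representation} with no regularity needed on $w_N$.

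The second step is to pass to the left-point form \eqref{eq:time-grid-left-point-representation}. Let $\Delta_1$ and $\Delta_2$ be the two differences appearing inside the absolute values in \eqref{eq:time-grid-consistency-defect}: $\Delta_1$ is the sum of the evaluation-point differences and $\Delta_2$ the generator quadrature error. Writing the exact quantity as the left-point quantity plus $\Delta_1+\Delta_2$, we get
\[
\mathbb E\bigl[e^{-r(T-t_n)}g(e^{Y_T})\bigr]-w_n(y)\le \mathbb E\Bigl[\tfrac TN\sum_{k=n}^{N-1}e^{-r(t_k-t_n)}\bigl(D_t^+w_k+L_k^{a_k}w_k\bigr)(Y_{t_k})\Bigr]+\mathbb E\bigl[|\Delta_1|+|\Delta_2|\bigr].
\]
The last expectation is bounded by $\mathfrak E_N(w)(t_n,y)$ by definition of the supremum. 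The only care needed here is to check that each of these expectations is finite, so that the splitting is legitimate; this follows from the same growth bounds as in the first step.

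The third step uses $\operatorname{Tr}[\Gamma_k^{\log}a_ka_k^\top]\le H^\star[\Gamma_k^{\log}]$ pointwise, which turns $D_t^+w_k+L_k^{a_k}w_k$ into at most $\widehat{\mathcal R}_k^{\log}(w)$. The resulting expectation is bounded by $\widehat\varepsilon_N(w)(t_n,y)$, because $\alpha$ is admissible in its supremum. Taking the supremum over $\alpha\in\mathcal A_N^{t_n}$ on the left yields \eqref{eq:time-grid-discrete-upper-bound}. Adding \eqref{eq:discrete-log-primal-bound}, namely $W(t_n,y)\le\mathcal W_n^\star(y)+C(T/N)^{1/4}$ with the same candidate-independent $C$, gives \eqref{eq:time-grid-continuous-upper-bound}.

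The main obstacle is minor and lies in the first step: applying It\^o's formula and taking expectations when $w_N$ is not differentiable. This is handled by never differentiating $w_N$: the last interval applies It\^o's formula to $w_{N-1}$ only, and $w_N$ enters solely through the evaluation term $(w_N-w_{N-1})(Y_T)$. If $\mathfrak E_N$ or $\widehat\varepsilon_N$ is $+\infty$, the bound is trivial.
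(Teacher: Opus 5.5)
Your proposal is correct and follows the paper's proof essentially step for step. You fix $\alpha\in\mathcal A_N^{t_n}$, start from the exact telescoping identity \eqref{eq:time-grid-exact-representation}, and absorb the two left-point replacements into $\mathfrak E_N(w)(t_n,y)$; then you bound $\operatorname{Tr}[\Gamma_k^{\log}a_ka_k^\top]\leq H^\star[\Gamma_k^{\log}]$ to recover $\widehat{\mathcal R}_k^{\log}(w)$, take the supremum, and invoke \eqref{eq:discrete-log-primal-bound}. Your added checks simply make explicit what the paper derives in the text preceding the proposition: that the It\^o drift equals $L_k^{a_k}w_k$ via the $-\tfrac12\operatorname{diag}(\nabla w_k)$ contribution from $b(a_k)$, and that the splitting is integrable.
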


\begin{proof}
Fix $\alpha\in\mathcal A_N^{t_n}$ and write
$Y_s=Y_s^{\alpha,t_n,y}$. By
\eqref{eq:time-grid-exact-representation} and the definition of
$\mathfrak E_N(w)$ in \eqref{eq:time-grid-consistency-defect},
\begin{align*}
&
\mathbb E^{\mathbb P^{t_n}}
\left[
e^{-r(T-t_n)}g(e^{Y_T})
\right]
\\
&\qquad\leq
w_n(y)
+
\mathbb E^{\mathbb P^{t_n}}
\Bigg[
\sum_{k=n}^{N-1}
e^{-r(t_k-t_n)}
(w_{k+1}-w_k)(Y_{t_k})
\\
&\hspace{30mm}
+
\frac{T}{N}
\sum_{k=n}^{N-1}
e^{-r(t_k-t_n)}
L_k^{a_k}w_k(Y_{t_k})
\Bigg]
+
\mathfrak E_N(w)(t_n,y).
\end{align*}
Using
\[
w_{k+1}(y)-w_k(y)
=
\frac{T}{N}D_t^+w_k(y)
\]
and, by the definition of $H^\star$,
\[
L_k^{a_k}w_k(y)
\leq
r\bigl(
\mathbf 1_d^\top\nabla w_k(y)-w_k(y)
\bigr)
+
H^\star[\Gamma_k^{\log}(y)],
\]
we obtain
\begin{align*}
&
\mathbb E^{\mathbb P^{t_n}}
\left[
e^{-r(T-t_n)}g(e^{Y_T})
\right]
\\
&\qquad\leq
w_n(y)
+
\mathbb E^{\mathbb P^{t_n}}
\left[
\frac{T}{N}
\sum_{k=n}^{N-1}
e^{-r(t_k-t_n)}
\widehat{\mathcal R}_k^{\log}(w)(Y_{t_k})
\right]
+
\mathfrak E_N(w)(t_n,y).
\end{align*}
Taking the supremum over
$\alpha\in\mathcal A_N^{t_n}$ yields
\eqref{eq:time-grid-discrete-upper-bound} by
\eqref{eq:time-grid-residual-control}.

Finally, \eqref{eq:discrete-log-primal-bound} gives
\[
W(t_n,y)
\leq
\mathcal W_n^\star(y)
+
C\bigl(\tfrac{T}{N}\bigr)^{1/4},
\]
which, together with
\eqref{eq:time-grid-discrete-upper-bound}, proves
\eqref{eq:time-grid-continuous-upper-bound}.
\end{proof}

The proposition is a finite-grid a posteriori result: it does not require
the trained families obtained for different values of $N$ to satisfy
uniform derivative bounds, nor does it impose temporal regularity on the
critics. In particular, at the initial time, if for a sequence of grids
and associated candidate families
\[
\mathfrak E_N(w)(0,y)\longrightarrow0,
\]
then the only remaining universal discretization correction in
\eqref{eq:time-grid-continuous-upper-bound} is the
$(T/N)^{1/4}$ term inherited from
\eqref{eq:discrete-log-primal-bound}. More generally, the same conclusion
holds along any sequence $n=n(N)$ for which $t_{n(N)}$ converges to the
time at which the value is to be evaluated, provided the corresponding
consistency defect tends to zero.

For the SPG critics considered here, the spatial regularity assumptions
are natural. Indeed, a finite one-hidden-layer network with GELU
activation is smooth, has at most linear growth, and has bounded first and
second derivatives for fixed network parameters. The terminal function
$w_N=g\circ\exp$ is treated only through its values and therefore
requires no regularity beyond \Hyp{hyp:log-payoff-lipschitz}.

\subsection{Numerical evaluation by resimulation}
\label{subsubsec:resimulation}

The discrete control problems introduced above still involve a supremum
over admissible controls and cannot in general be solved exactly. In the
numerical implementation, an admissible control $\widehat\alpha$ is first
obtained and the corresponding expectation is then
estimated by an independent Monte Carlo resimulation. The control
$\widehat\alpha$ is generally suboptimal for the corresponding stochastic
control problem. It is important to distinguish the resulting
optimization error from the Monte Carlo sampling error.

For the primal problem, any
$\widehat\alpha\in\mathcal A_N^{t_n}$ satisfies
\[
\mathbb E^{\mathbb P^{t_n}}
\left[
e^{-r(T-t_n)}
g\bigl(
e^{Y_T^{\widehat\alpha,t_n,y}}
\bigr)
\right]
\leq
\mathcal W_n^\star(y)
\leq
W(t_n,y).
\]
Thus the optimization error is one-sided in the favorable direction:
evaluating a suboptimal admissible control can only decrease the primal
value. Consequently, apart from Monte Carlo sampling error, resimulation
under $\widehat\alpha$ preserves the lower-bound property.

The situation is different for the dual control problems. For the direct
dual problem, the definition
\eqref{eq:fully-discrete-log-direct-dual} gives, for every admissible
$\widehat\alpha$,
\[
\mathbb E^{\mathbb P^{t_n}}\Bigg[
e^{-r(T-t_n)}
g\bigl(
e^{Y_T^{\widehat\alpha,t_n,y}}
\bigr)
+
\frac{T}{N}
\sum_{k=n}^{N-1}
e^{-r(t_k-t_n)}
F(w)
\bigl(
t_k,Y_{t_k}^{\widehat\alpha,t_n,y},
\widehat\alpha_{t_k}
\bigr)
\Bigg]
\leq
\mathcal D_N^{\log}(w)(t_n,y).
\]
Hence replacing the supremum in
$\mathcal D_N^{\log}(w)$ by resimulation under
$\widehat\alpha$ approximates the dual value from below. Since the upper
bound established above involves $\mathcal D_N^{\log}(w)$ itself,
\[
W(t_n,y)
\leq
\mathcal D_N^{\log}(w)(t_n,y)
+
C\bigl(\tfrac{T}{N}\bigr)^{1/4},
\]
the same replacement does not in general preserve the upper-bound
property.

The same issue arises for the residual formulation. For every admissible
$\widehat\alpha$,
\[
\mathbb E^{\mathbb P^{t_n}}\Bigg[
e^{-r(T-t_n)}
\left(
g\bigl(
e^{Y_T^{\widehat\alpha,t_n,y}}
\bigr)
-
w\bigl(
T,Y_T^{\widehat\alpha,t_n,y}
\bigr)
\right)
+
\frac{T}{N}
\sum_{k=n}^{N-1}
e^{-r(t_k-t_n)}
\mathcal R^{\log}(w)
\bigl(
t_k,Y_{t_k}^{\widehat\alpha,t_n,y}
\bigr)
\Bigg]
\leq
\varepsilon_N^{\log}(w)(t_n,y).
\]
Thus a resimulated approximation of
$\varepsilon_N^{\log}(w)$ is again biased downward by the optimization
error, whereas the theoretical upper bound requires the supremum itself.

The same conclusion applies to the time-grid formulation. In particular,
evaluating the expectation in
\eqref{eq:time-grid-residual-control} under a fixed admissible control
produces a value no larger than
$\widehat\varepsilon_N(w)(t_n,y)$. Likewise, if the consistency defect
$\mathfrak E_N(w)(t_n,y)$ is itself approximated by optimization and
resimulation, evaluation under a fixed control does not in general
recover the supremum appearing in
\eqref{eq:time-grid-consistency-defect}. Therefore, replacing these suprema by values obtained under fixed resimulated controls does not preserve the theoretical upper bound.

Monte Carlo simulation introduces a separate statistical error in the
evaluation of the expectations above. On the primal side, the exact
expectation obtained under the admissible control $\widehat\alpha$ is a
lower bound for $W(t_n,y)$, so the resimulated value retains this
interpretation up to Monte Carlo sampling error. On the dual side, Monte Carlo simulation only approximates the expectation
associated with the chosen control $\widehat\alpha$. Even if this
expectation were evaluated exactly, it would be no larger than the
supremum defining the theoretical dual quantity. The loss of the
upper-bound property therefore comes from the optimization error rather
than from the Monte Carlo approximation.

\section{Numerical methodology and experiments}
\label{sec:numerical-methodology-experiments}

We now turn to the numerical construction of the candidates and to the
evaluation of the primal and dual quantities introduced above. We first
describe the backward SPG and PINN approximations and the common evaluation
procedure, and then report the five- and two-dimensional experiments.

\subsection{Numerical methodology}
\label{subsec:numerical-methodology}

\subsubsection{Backward SPG approximation and derivative-informed critics}
\label{subsubsec:spg-numerical-scheme}

We use the backward SPG architecture of~\cite{Abb+26} on the time grid
\[
0=t_0<t_1<\cdots<t_N=T,
\qquad
t_n=\frac{nT}{N},
\]
and parameterize the critics directly in logarithmic coordinates. At each
time level $t_n$, $n=N-1,\ldots,0$, a separate stochastic actor
$\pi_{\theta_n}$ and critic $\mathcal W_{\phi_n}$ are trained, while the
already learned critic at time $t_{n+1}$ is frozen and provides the
continuation value. The resulting candidate sequence is
\[
w_n
\coloneqq
\mathcal W_{\phi_n^\star},
\qquad
n=0,\ldots,N-1,
\qquad
w_N(y)
\coloneqq
g(e^y),
\]
and therefore coincides with the class of time-grid candidates considered
in~\Cref{subsubsec:time-grid-candidates}.

The actor parametrization, admissible-control map, PPO update, correlation
penalty, state-sampling distributions, and backward training procedure are
those of~\cite{Abb+26}. We recall only the critic regression and the
derivative-informed modifications relevant for the present a posteriori
analysis.

At a fixed backward step $n$ and training epoch $\ell$, let
$Y_n^{(\ell)}\sim\mu_n$ be a sampled log-state. Keeping fixed the
realization of the policy exploration noise, denote by
\[
y\mapsto a_n^{(\ell)}(y)
\]
the corresponding state-dependent feedback control. If
$\xi_n^{(\ell)}\sim\mathcal N(0,I_d)$ is the independent Gaussian
innovation used for the state transition, define
\[
\Phi_n^{(\ell)}(y)
\coloneqq
y
+
b\bigl(a_n^{(\ell)}(y)\bigr)\frac{T}{N}
+
a_n^{(\ell)}(y)\sqrt{\frac{T}{N}}\,
\xi_n^{(\ell)}.
\]
Since the control is constant over one time interval, this is the exact
one-step transition of the log-state dynamics. The corresponding
state-dependent Bellman target is
\[
\mathcal Z_n^{(\ell)}(y)
\coloneqq
e^{-rT/N}
w_{n+1}\bigl(\Phi_n^{(\ell)}(y)\bigr),
\]
and its realization at the sampled state is
\[
Z_n^{(\ell)}
\coloneqq
\mathcal Z_n^{(\ell)}\bigl(Y_n^{(\ell)}\bigr).
\]
The standard critic minimizes
\begin{equation}
\label{eq:backward-standard-critic-loss}
\mathcal L_{n,W}^{\mathrm C}(\phi)
=
\mathbb E_n^{(\ell)}
\left[
\left|
\mathcal W_\phi\bigl(Y_n^{(\ell)}\bigr)
-
Z_n^{(\ell)}
\right|^2
\right].
\end{equation}
At the end of each backward step, as in~\cite{Abb+26}, a final critic
epoch is performed under the deterministic policy induced by the trained
actor.

\paragraph{First-order Sobolev training.}

Value regression alone does not control the spatial derivatives of the
critic. This is already relevant for the martingale dual estimate used
in~\cite{Abb+26}. Indeed, the stochastic-control duality
of~\cite{HLR16}, applied directly to the log-state dynamics, leads to a
martingale correction whose discrete approximation contains terms of the
form
\[
e^{-r(t_k-t_n)}
\nabla_y w_k(Y_{t_k})^\top
a_k
\bigl(
W_{t_{k+1}}-W_{t_k}
\bigr).
\]
Thus the quality of this diagnostic depends directly on the approximation
of the log-gradient $\nabla_y w_k$.

For this reason,~\cite{Abb+26} augments the value regression with a
first-order Sobolev loss, following the Sobolev Training methodology
of~\cite{Cza+17}, see also~\cite{HSW90,HS20}. At each training sample, set
\[
\bar a_n^{(\ell)}
\coloneqq
a_n^{(\ell)}\bigl(Y_n^{(\ell)}\bigr),
\]
and define the corresponding frozen-action target map
\[
\widetilde{\mathcal Z}_n^{(\ell)}(y)
\coloneqq
e^{-rT/N}
w_{n+1}
\Big(y+ b\bigl(\bar a_n^{(\ell)}\bigr)\frac{T}{N} + \bar a_n^{(\ell)} \sqrt{\frac{T}{N}}\, \xi_n^{(\ell)} \Big).
\]
By construction,
$\widetilde{\mathcal Z}_n^{(\ell)}(Y_n^{(\ell)})=Z_n^{(\ell)}$, so this does not alter the value target. The first-order derivative target is
\[
\nabla_y
\widetilde{\mathcal Z}_n^{(\ell)}
\bigl(Y_n^{(\ell)}\bigr).
\]
Thus the actor is evaluated at the sampled state, but the resulting action
is treated as fixed when differentiating with respect to the state. In particular, no derivative is propagated through the actor. This is the convention used in~\cite{Abb+26}. We denote the corresponding loss by
$\mathcal L_{n,DW}^{\mathrm C}$ and otherwise use the construction
of~\cite{Abb+26} unchanged.

\paragraph{Second-order Sobolev training.}

The a posteriori quantities introduced in
\Cref{subsubsec:time-grid-candidates} additionally require second-order
spatial information. In logarithmic coordinates,
\[
\Gamma_n^{\log}(y)
=
\frac12
\left(
D_y^2w_n(y)
-
\operatorname{diag}\bigl(\nabla_yw_n(y)\bigr)
\right),
\]
and the numerical residual
$\widehat{\mathcal R}_n^{\log}(w)$ in
\eqref{eq:time-grid-residual} depends on this matrix through
$H^\star[\Gamma_n^{\log}]$. Consequently, accurate values and gradients
alone do not guarantee an accurate evaluation of the a posteriori
residual. We therefore extend the first-order Sobolev training
of~\cite{Abb+26} by incorporating second-order information.

A direct approach would compare the full Hessians
$D_y^2\mathcal W_\phi$ and
$D_y^2\widetilde{\mathcal Z}_n^{(\ell)}$ at every training sample. To avoid constructing and storing a matrix-valued target, we use
random directional second derivatives as in stochastic Sobolev
Training~\cite{Cza+17}. Let
\[
\zeta_n^{(\ell)}
\sim
\mathcal N(0,I_d)
\]
be independent of the other sampled quantities. For a twice
differentiable function $f:\mathbb R^d\to\mathbb R$, set
\[
D_{\zeta\zeta}^2f(y)
\coloneqq
\zeta^\top D_y^2f(y)\zeta.
\]
The scalar second-order target is
\[
Z_{n,2}^{(\ell)}
\coloneqq
D_{\zeta_n^{(\ell)}\zeta_n^{(\ell)}}^2
\widetilde{\mathcal Z}_n^{(\ell)}
\bigl(Y_n^{(\ell)}\bigr),
\]
and the corresponding critic loss is
\begin{equation}
\label{eq:backward-directional-hessian-loss}
\mathcal L_{n,D^2W}^{\mathrm C}(\phi)
\coloneqq
\mathbb E_n^{(\ell)}
\left[
\left|
D_{\zeta_n^{(\ell)}\zeta_n^{(\ell)}}^2
\mathcal W_\phi\bigl(Y_n^{(\ell)}\bigr)
-
Z_{n,2}^{(\ell)}
\right|^2
\right].
\end{equation}

Directional probing retains information about the complete Hessian,
including its mixed derivatives. Indeed, if
$B\in\mathbb S^d$ and $\zeta\sim\mathcal N(0,I_d)$, then
\[
\mathbb E_\zeta
\left[
(\zeta^\top B\zeta)^2
\right]
=
2\|B\|_F^2+\operatorname{Tr}(B)^2,
\]
and therefore
\[
2\|B\|_F^2
\leq
\mathbb E_\zeta
\left[
(\zeta^\top B\zeta)^2
\right]
\leq
(d+2)\|B\|_F^2.
\]
Hence the expected directional squared error is equivalent to the squared
Frobenius norm of the full Hessian error.

The directional second derivatives are evaluated by automatic
differentiation. For a fixed direction $\zeta$,
\[
D_\zeta f(y)
=
\nabla_y f(y)^\top\zeta,
\qquad
D_{\zeta\zeta}^2f(y)
=
\zeta^\top\nabla_y D_\zeta f(y),
\]
so that only directional derivatives, or equivalently
Hessian-vector products, are required during training. After training,
the full Hessians $D_y^2w_n$ are evaluated by automatic differentiation
at the points needed to compute $\Gamma_n^{\log}$ and
$\widehat{\mathcal R}_n^{\log}(w)$.

For $n\leq N-2$, the continuation critic $w_{n+1}$ is a smooth neural
network, and the first- and second-order targets are obtained by automatic
differentiation of the frozen-action map
$\widetilde{\mathcal Z}_n^{(\ell)}$. Thus, consistently with the
first-order construction of~\cite{Abb+26}, the sampled action is held fixed
when differentiating with respect to the state. We use the same convention
for the second-order target.

At the last backward step, $w_N=g\circ\exp$ need not be differentiable.
We therefore approximate the derivative targets by centered finite
differences. Writing $(e_i)_{1\leq i\leq d}$ for the canonical basis of
$\mathbb R^d$ and $\delta_{\mathrm{FD}}>0$ for the finite-difference step,
\[
\partial_{y_i}
\widetilde{\mathcal Z}_{N-1}^{(\ell)}(y)
\approx
\frac{
\widetilde{\mathcal Z}_{N-1}^{(\ell)}
(y+\delta_{\mathrm{FD}}e_i)
-
\widetilde{\mathcal Z}_{N-1}^{(\ell)}
(y-\delta_{\mathrm{FD}}e_i)
}{
2\delta_{\mathrm{FD}}
},
\]
while, for a sampled direction $\zeta$,
\[
D_{\zeta\zeta}^2
\widetilde{\mathcal Z}_{N-1}^{(\ell)}(y)
\approx
\frac{
\widetilde{\mathcal Z}_{N-1}^{(\ell)}
(y+\delta_{\mathrm{FD}}\zeta)
-
2\widetilde{\mathcal Z}_{N-1}^{(\ell)}(y)
+
\widetilde{\mathcal Z}_{N-1}^{(\ell)}
(y-\delta_{\mathrm{FD}}\zeta)
}{
\delta_{\mathrm{FD}}^2
}.
\]
The same realizations of the exogenous random variables and the same
frozen action $\bar a_{N-1}^{(\ell)}$ are used at all perturbed states;
the feedback control is not reevaluated at the perturbed points. Since the
perturbations are performed in logarithmic coordinates, no positivity
constraint on the perturbed state is required.

The normalization of the value and derivative losses is the same as
in~\cite{Abb+26} and is not repeated here. With this convention, the
second-order Sobolev critic minimizes
\begin{equation}
\label{eq:sobolev2-critic-loss}
\mathcal L_{n,\mathrm{Sob2}}^{\mathrm C}(\phi)
=
\mathcal L_{n,W}^{\mathrm C}(\phi)
+
\eta_1\,\mathcal L_{n,DW}^{\mathrm C}(\phi)
+
\eta_2\,\mathcal L_{n,D^2W}^{\mathrm C}(\phi),
\end{equation}
where $\eta_1,\eta_2>0$ control the relative weights assigned to the
first- and second-order derivative information, respectively.

\subsubsection{PINN approximation}
\label{subsubsec:pinn-numerical-scheme}

In both the two- and five-dimensional experiments, we additionally
construct a global-in-time physics-informed neural-network approximation
\[
w_\vartheta:[0,T]\times\mathbb R^d\to\mathbb R,
\qquad
d\in\{2,5\},
\]
of the log-value function $W$. Neural-network methods based on penalizing
the differential operator at sampled points have been widely used for
the numerical solution of PDEs, see, e.g.,~\cite{RPK19,SS18}. Related
neural actor-critic approaches have also been developed specifically
for Hamilton-Jacobi-Bellman equations, see~\cite{CHJS25}.

We use a fully connected network with hyperbolic tangent activations.
Before being passed to the network, time is rescaled and the log-state is
centered at its initial value:
\[
\widetilde t
=
\frac{2t}{T}-1,
\qquad
\widetilde y_i
=
y_i-y_0^i,
\qquad
y_0^i=\log X_0^i,
\quad
i=1,\ldots,d.
\]
All derivatives below are nevertheless taken with respect to the physical
variables $(t,y)$. Since the network has hyperbolic tangent activations,
the resulting candidate is smooth with bounded derivatives of every
finite order and therefore satisfies the regularity requirements imposed
on the smooth space-time candidates of
\Cref{subsubsec:smooth-space-time-candidates}.

At an interior collocation point $(t,y)$, automatic differentiation
provides
\[
\partial_t w_\vartheta(t,y),
\qquad
\nabla_y w_\vartheta(t,y),
\qquad
D_y^2w_\vartheta(t,y).
\]
We then form
\[
\Gamma_{w_\vartheta}^{\log}(t,y)
=
\frac12
\left(
D_y^2w_\vartheta(t,y)
-
\operatorname{diag}\bigl(\nabla_yw_\vartheta(t,y)\bigr)
\right)
\]
and evaluate the logarithmic residual
\[
\mathcal R^{\log}(w_\vartheta)(t,y)
=
\partial_t w_\vartheta(t,y)
+
r\left(
\mathbf 1^\top\nabla_yw_\vartheta(t,y)
-
w_\vartheta(t,y)
\right)
+
H^\star
\left[
\Gamma_{w_\vartheta}^{\log}(t,y)
\right].
\]
For both uncertainty sets considered in the numerical experiments, the
Hamiltonian $H^\star$ is available in explicit form and is therefore
evaluated exactly at every collocation point, without an additional
numerical optimization.

The collocation distribution is generated from auxiliary
constant-coefficient models whose parameters range over the corresponding
uncertainty set. This produces training points in regions of the state
space naturally explored by the controlled dynamics. In dimension two, for every interior sample we draw independently
\[
t\sim\mathcal U([0,T]),
\qquad
\sigma_i
\sim
\mathcal U([\sigma_{\min}^i,\sigma_{\max}^i]),
\quad i=1,2,
\qquad
\rho
\sim
\mathcal U([\rho_{\min},\rho_{\max}]),
\]
together with $Z_1,Z_2\sim\mathcal N(0,1)$, and set
\[
\widetilde Z_1=Z_1,
\qquad
\widetilde Z_2
=
\rho Z_1+\sqrt{1-\rho^2}\,Z_2.
\]
The log-state collocation point is then given by
\[
Y_t^i
=
y_0^i
+
\left(
r-\frac12\sigma_i^2
\right)t
+
\sigma_i\sqrt t\,\widetilde Z_i,
\qquad
i=1,2.
\] 
In our five-dimensional example, correlations are fixed to zero. We independently draw
\[
t\sim\mathcal U([0,T]),
\qquad
\sigma_i
\sim
\mathcal U([\sigma_{\min}^i,\sigma_{\max}^i]),
\quad i=1,\ldots,5,
\]
together with independent $Z_1,\ldots,Z_5\sim\mathcal N(0,1)$, and set
\[
Y_t^i
=
y_0^i
+
\left(
r-\frac12\sigma_i^2
\right)t
+
\sigma_i\sqrt t\,Z_i,
\qquad
i=1,\ldots,5.
\]

At every optimization iteration, a fresh batch of interior collocation
points is generated according to the corresponding construction above.
Independently, a second fresh batch is generated at the fixed time $T$
using the same auxiliary models in order to penalize the terminal
mismatch. The training objective is
\begin{equation}
\label{eq:pinn-training-loss}
\mathcal L_{\mathrm{PINN}}(\vartheta)
=
\frac{1}{M_{\mathrm{int}}}
\sum_{m=1}^{M_{\mathrm{int}}}
\left|
\mathcal R^{\log}(w_\vartheta)
(t^m,Y_{t^m}^m)
\right|^2
+
\lambda_T
\frac{1}{M_{\mathrm{term}}}
\sum_{m=1}^{M_{\mathrm{term}}}
\left|
w_\vartheta(T,Y_T^m)
-
g(e^{Y_T^m})
\right|^2,
\end{equation}
where $M_{\mathrm{int}}$ and $M_{\mathrm{term}}$ denote the numbers of
interior and terminal samples, respectively. Thus the terminal condition
is not imposed exactly during training; its mismatch is penalized and is
subsequently retained explicitly in the a posteriori quantities of
\Cref{subsubsec:smooth-space-time-candidates}.

The parameters are optimized with Adam~\cite{KB14}. The interior and
terminal training samples are regenerated independently at every
iteration, whereas a separate validation sample is drawn once before
training and subsequently kept fixed. This validation set consists of an
interior batch generated from the same collocation distribution and an
independent terminal batch at time $T$. At regular intervals, we evaluate
on this fixed sample the validation counterpart of
\eqref{eq:pinn-training-loss}. The learning rate is reduced when this
validation loss ceases to improve, and the network parameters
corresponding to the smallest recorded validation loss are stored
throughout the optimization. At the end of training, this best validation
checkpoint is restored and used as the smooth space-time candidate
$w_\vartheta$ in all subsequent primal and dual computations.

\subsubsection{Primal and dual evaluation of the candidates}
\label{subsubsec:primal-dual-evaluation}

Once a candidate has been constructed, either by backward SPG or by the
PINN approximation above, we evaluate its primal and dual performance by
independent Monte Carlo resimulation. Throughout this subsection, the
state dynamics are simulated in logarithmic coordinates using the exact
one-step transition introduced above.

For each of the dual optimization problems considered below, we use the
SPG algorithm to construct an approximately maximizing control. The
candidate itself is kept fixed during this optimization. The auxiliary
critics trained by SPG are used only internally to learn the control and
do not enter the reported values. After training, the learned control is
resimulated independently and the corresponding dual objective is
evaluated directly. The distinction between these resimulated quantities
and the theoretical suprema is discussed in
\Cref{subsubsec:resimulation}.

\paragraph{Primal estimate.}

For the backward SPG approximation, we use the deterministic feedback
induced by the trained actor. For the PINN candidate $w_\vartheta$, which
does not produce an actor, we use the candidate-induced feedback
\[
a^{w_\vartheta}(t,y)
\in
\arg\max_{a\in A}
\operatorname{Tr}
\left[
\Gamma_{w_\vartheta}^{\log}(t,y)aa^\top
\right],
\]
where
\[
\Gamma_{w_\vartheta}^{\log}(t,y)
=
\frac12
\left(
D_y^2w_\vartheta(t,y)
-
\operatorname{diag}\bigl(\nabla_yw_\vartheta(t,y)\bigr)
\right).
\]
For both uncertainty sets considered below, this maximization is
available in explicit form.

In either case, the resulting feedback is evaluated on the numerical time
grid and its discounted terminal payoff
\[
\mathbb E
\left[
e^{-rT}g\bigl(e^{Y_T^{\widehat\alpha}}\bigr)
\right]
\]
is estimated by independent Monte Carlo resimulation. At the
exact-expectation level this is the fixed-control primal quantity
considered in \Cref{subsubsec:resimulation} and hence is no larger than
$W(0,y_0)$, where $y_0=\log x_0$.

\paragraph{The $\mathcal D$-approach.}

For either type of candidate, define at each grid point
\[
\Gamma_k^{\log}(y)
=
\begin{cases}
\displaystyle
\frac12
\left(
D_y^2w_k(y)
-
\operatorname{diag}\bigl(\nabla_yw_k(y)\bigr)
\right),
& \text{for a backward SPG candidate},\\[3mm]
\Gamma_{w_\vartheta}^{\log}(t_k,y),
& \text{for a PINN candidate},
\end{cases}
\]
and
\[
F_k(y,a)
\coloneqq
H^\star[\Gamma_k^{\log}(y)]
-
\operatorname{Tr}
\left[
\Gamma_k^{\log}(y)aa^\top
\right].
\]
The discrete direct-dual objective is therefore
\[
\mathbb E
\left[
e^{-rT}g(e^{Y_T^\alpha})
+
\frac{T}{N}
\sum_{k=0}^{N-1}
e^{-rt_k}
F_k(Y_{t_k}^\alpha,\alpha_k)
\right].
\]
We use the SPG algorithm to approximately maximize this quantity over
piecewise-constant admissible controls. Denoting the resulting control by
$\widehat\alpha_{\mathcal D}$, the numerical $\mathcal D$-estimate is the
independent Monte Carlo estimate of
\[
\mathbb E
\left[
e^{-rT}g(e^{Y_T^{\widehat\alpha_{\mathcal D}}})
+
\frac{T}{N}
\sum_{k=0}^{N-1}
e^{-rt_k}
F_k
\left(
Y_{t_k}^{\widehat\alpha_{\mathcal D}},
\widehat\alpha_{\mathcal D,k}
\right)
\right].
\]
In particular, the auxiliary SPG critic used while optimizing this
control is not used to evaluate the final quantity.

\paragraph{The $\varepsilon$-approach.}

For a backward SPG candidate $w=(w_0,\ldots,w_N)$, with
$w_N=g\circ\exp$, we use the time-grid residual introduced in
\Cref{subsubsec:time-grid-candidates},
\[
\widehat{\mathcal R}_k^{\log}(w)(y)
=
\frac{
w_{k+1}(y)-w_k(y)
}{
T/N
}
+
r\left(
\mathbf 1^\top\nabla_yw_k(y)-w_k(y)
\right)
+
H^\star[\Gamma_k^{\log}(y)].
\]
The terminal mismatch then vanishes.

For the PINN candidate, we instead set
\[
\widehat{\mathcal R}_k^{\log}(w_\vartheta)(y)
=
\mathcal R^{\log}(w_\vartheta)(t_k,y),
\]
and retain the terminal mismatch
\[
g(e^y)-w_\vartheta(T,y).
\]

Accordingly, for a generic candidate we optimize the corresponding
discrete residual objective
\[
\mathbb E
\left[
e^{-rT}\delta_T(Y_T^\alpha)
+
\frac{T}{N}
\sum_{k=0}^{N-1}
e^{-rt_k}
\widehat{\mathcal R}_k^{\log}(Y_{t_k}^\alpha)
\right],
\]
where
\[
\delta_T(y)
=
\begin{cases}
0,
& \text{for a backward SPG candidate},\\
g(e^y)-w_\vartheta(T,y),
& \text{for a PINN candidate}.
\end{cases}
\]
As for the $\mathcal D$-approach, the maximizing control is approximated
with SPG and subsequently evaluated by independent resimulation. If
$\widehat\alpha_\varepsilon$ denotes the learned control, the reported
residual estimate is the corresponding resimulated expectation added to
the candidate value at the initial point,
\[
w_0(y_0)
+
\mathbb E
\left[
\frac{T}{N}
\sum_{k=0}^{N-1}
e^{-rt_k}
\widehat{\mathcal R}_k^{\log}
\left(
Y_{t_k}^{\widehat\alpha_\varepsilon}
\right)
\right]
\]
for the backward SPG candidate, and
\[
w_\vartheta(0,y_0)
+
\mathbb E
\left[
e^{-rT}
\left(
g(e^{Y_T^{\widehat\alpha_\varepsilon}})
-
w_\vartheta(T,Y_T^{\widehat\alpha_\varepsilon})
\right)
+
\frac{T}{N}
\sum_{k=0}^{N-1}
e^{-rt_k}
\mathcal R^{\log}(w_\vartheta)
\left(
t_k,Y_{t_k}^{\widehat\alpha_\varepsilon}
\right)
\right]
\]
for the PINN candidate.

For the time-grid SPG candidate, the theoretical upper estimate of
\Cref{subsubsec:time-grid-candidates} additionally contains the
consistency defect $\mathfrak E_N(w)$ and the control-discretization
correction of order $(T/N)^{1/4}$. These terms are not part of the
resimulated residual quantity reported here.

\paragraph{Martingale dual estimate.}

For comparison, we also compute the martingale dual estimate used
in~\cite{Abb+26}, based on the stochastic-control duality
of~\cite{HLR16}. In logarithmic coordinates, set
\[
\varphi_k(y)
=
\begin{cases}
\nabla_yw_k(y),
& \text{for a backward SPG candidate},\\[1mm]
\nabla_yw_\vartheta(t_k,y),
& \text{for a PINN candidate}.
\end{cases}
\]
For the backward SPG approximation, as in~\cite{Abb+26}, the gradients
used here are those of the critic trained with first-order Sobolev
information.

For a pathwise action sequence
$a=(a_0,\ldots,a_{N-1})$, let $Y^a$ denote the corresponding exact
log-state recursion. The discretized martingale dual functional is
\begin{equation}
\label{eq:numerical-martingale-dual}
\widehat U_{\mathrm M}(w)
\coloneqq
\mathbb E
\left[
\max_{a\in(A_h)^N}
\left\{
e^{-rT}g(e^{Y_T^a})
-
\sqrt{\frac{T}{N}}
\sum_{k=0}^{N-1}
e^{-rt_k}
\varphi_k(Y_{t_k}^a)^\top
a_k\xi_k
\right\}
\right].
\end{equation}
Here $A_h$ denotes the finite control discretization used in the
pathwise maximization. We approximate this maximization with the same
GPU-vectorized coordinate-wise exhaustive search on a finite latent-control
mesh as in~\cite{Abb+26}, initialized from the trained primal actor, and
refer there for the implementation details.

\subsubsection{Parameters}
\label{subsubsec:params}

Unless stated otherwise, we retain the SPG hyperparameters used in \cite{Abb+26}, where they were used successfully in the reported numerical experiments, and do not retune them for the present experiments. The only systematic architectural modification is the use of the smooth \texttt{GELU} activation function, motivated by the derivative-based Sobolev losses and dual constructions considered below. At each backward time level, the regression sample contains $M=2^{16}$ states and is processed in mini-batches of size $2^{11}$. The last backward step, where the continuation value is given by the nonsmooth terminal payoff, is trained for $500$ optimization iterations, while each preceding time level is warm-started from the network learned at the following date and trained for $10$ iterations. Both the actor and the critic consist of one hidden layer of width $32$ with \texttt{GELU} activation. The PPO clipping parameter is $0.2$. The Gaussian exploration variances in the raw actor coordinates are decreased during each backward optimization from $1$ to $10^{-2}$. The Adam learning rates are $5\times10^{-3}$ at the terminal backward step and $2.5\times10^{-3}$ at the remaining steps. After the actor optimization at each date, the critic is refitted once under the corresponding deterministic feedback policy.

For the first-order Sobolev critic, the value and gradient losses are assigned equal weights. For the second-order Sobolev critic we use one independent Gaussian probing direction per training state. The relative weights in
\eqref{eq:sobolev2-critic-loss} are $\eta_1=1$ and $\eta_2=10^{-2}$.

All primal and auxiliary-control values are evaluated by independent Monte Carlo resimulation with $2^{19}$ paths. Reported confidence intervals are the usual Gaussian $95\%$ Monte Carlo intervals. The pathwise martingale dual uses $2^{10}$ outer Monte Carlo paths. Its finite latent-control mesh has spacing $\delta=0.1$, the coordinate-wise exhaustive search is initialized from the corresponding primal feedback whenever available, and $5$ complete coordinate sweeps are performed.

The PINN uses three hidden layers of width $64$ with hyperbolic tangent activations and is trained in double precision. At each iteration, $2^8$ fresh interior collocation points and $2^8$ fresh terminal points are generated. The terminal penalty in
\eqref{eq:pinn-training-loss} is $\lambda_T=10$. Training uses Adam with initial learning rate $10^{-2}$ for $10^4$ iterations. A fixed validation set containing $2^{12}$ interior and $2^{12}$ terminal points is evaluated every $100$ iterations. A \texttt{ReduceLROnPlateau} schedule multiplies the learning rate by $1/2$ after $10$ validation checks without sufficient relative improvement, with threshold $10^{-3}$ and minimum learning rate $10^{-4}$. The network corresponding to the smallest recorded validation loss is restored at the end of training and used in all subsequent primal and dual evaluations.

\subsection{Five-dimensional experiment}
\label{subsec:numerics-d5}

We consider the five-dimensional geometric call-spread example of
\cite{Abb+26}. The parameters are
\[
d=5,\qquad
X_0=(100,\ldots,100),\qquad
T=1,\qquad
r=0,
\]
\[
\sigma_{\min}^i=0.1,\qquad
\sigma_{\max}^i=0.2,
\qquad
i=1,\ldots,5,
\]
with all correlations fixed to zero and $N=64$. The payoff is
\[
g(x)
=
\left(\left(\prod_{i=1}^5x_i\right)^{1/5}-90\right)^+
-
\left(\left(\prod_{i=1}^5x_i\right)^{1/5}-110\right)^+.
\]
We use the reference value $9.70$ reported in~\cite{GMZ26}.

Since the correlations are fixed to zero, the Hamiltonian is explicit:
\[
H^\star[\gamma]
=
\sum_{i=1}^5
\left(
(\sigma_{\min}^i)^2\gamma^{ii}\mathbf 1_{\gamma^{ii}<0}
+
(\sigma_{\max}^i)^2\gamma^{ii}\mathbf 1_{\gamma^{ii}\ge0}
\right).
\]
Thus no numerical maximization is required in the evaluation of
$H^\star$, and the experiment provides a higher-dimensional test in which
the Hessian enters the dual constructions directly without an additional
optimization error in the Hamiltonian.

As in the two-dimensional experiments, we compare the standard SPG critic,
the first- and second-order Sobolev critics, and the PINN candidate. For each
candidate, we report the induced primal Monte Carlo estimate, the candidate
value at the initial condition, and the martingale, residual-based, and
direct dual estimates.

\begin{table}[H]
\centering
\caption{Five-dimensional geometric call-spread. Panel~(a) reports the numerical
estimates, with Monte Carlo quantities given together with the half-width of
their $95\%$ confidence interval. Panel~(b) reports the corresponding
wall-clock computational times.}
\label{tab:d5-geometric-callspread}

\textbf{(a) Numerical estimates}

\vspace{0.4em}

\begin{adjustbox}{max width=\textwidth}
\begin{tabular}{lccccc}
\toprule
Candidate
& Primal MC
& Candidate value
& $\widehat U_{\mathrm M}$
& $\widehat U_{\varepsilon}$
& $\widehat U_{\mathcal D}$
\\
\midrule
Standard SPG
& $9.6822\pm0.0123$
& $9.7009$
& $10.7333\pm0.0489$
& $9.8991\pm0.0015$
& $9.8264\pm0.0151$
\\
Sobolev-1 SPG
& $9.6871\pm0.0122$
& $9.6951$
& $10.2176\pm0.0512$
& $9.7893\pm0.0009$
& $9.7618\pm0.0148$
\\
Sobolev-2 SPG
& $9.6886\pm0.0121$
& $9.7074$
& $10.2468\pm0.0505$
& $9.7526\pm0.0008$
& $9.7329\pm0.0145$
\\
PINN
& $9.6913\pm0.0120$
& $9.6942$
& $10.0785\pm0.0476$
& $9.7248\pm0.0004$
& $9.7143\pm0.0142$
\\
\bottomrule
\end{tabular}
\end{adjustbox}

\vspace{1em}

\textbf{(b) Computational times}

\vspace{0.4em}

\begin{adjustbox}{max width=\textwidth}
\begin{tabular}{lcccc}
\toprule
Candidate
& Primal training
& $\widehat U_{\mathrm M}$
& $\widehat U_{\varepsilon}$
& $\widehat U_{\mathcal D}$
\\
\midrule
Standard SPG
& $87$ s
& $417$ s
& $287$ s
& $293$ s
\\
Sobolev-1 SPG
& $132$ s
& $426$ s
& $295$ s
& $299$ s
\\
Sobolev-2 SPG
& $175$ s
& $431$ s
& $301$ s
& $306$ s
\\
PINN
& $146$ s
& $420$ s
& $290$ s
& $297$ s
\\
\bottomrule
\end{tabular}
\end{adjustbox}

\vspace{0.4em}

{\small
``Primal training'' denotes the time required to construct the candidate.
The remaining columns report the additional time required to compute the
corresponding dual quantity from the trained candidate.
}
\end{table}

The primal estimates and candidate values remain close to the reference
across all four constructions, whereas the dual estimates again discriminate
more strongly between the quality of the learned derivatives. In particular,
the standard SPG critic gives an accurate value approximation but noticeably
looser dual estimates.

Derivative-informed training substantially tightens the dual quantities.
First-order Sobolev training produces a large improvement over the standard
critic, while the additional second-order supervision further improves both
Hessian-based estimates $\widehat U_{\mathcal D}$ and
$\widehat U_{\varepsilon}$. As in the two-dimensional experiments, this
additional improvement is not reflected systematically in the martingale
estimate, which depends on the candidate gradient rather than its Hessian.

The PINN gives the tightest dual estimates among the learned candidates. In
particular, both Hessian-based constructions remain close to the reference
price, while the martingale estimate retains a larger gap. The direct
$\mathcal D$-estimate is again tighter than the residual-based estimate for
all SPG candidates.

Overall, the five-dimensional experiment confirms the conclusions obtained
in dimension two. Accurate primal values alone are not sufficient for tight
a posteriori dual estimates, while derivative-informed training substantially
improves them. Explicit second-order supervision provides an additional
benefit specifically for the Hessian-based constructions, and this behavior
persists in higher dimension. Panel~(b) shows that the additional training
cost remains moderate relative to the subsequent dual evaluations.

\subsection{Two-dimensional experiments}
\label{subsec:numerics-d2}

\subsubsection{Outperformer spread}
\label{subsubsec:d2-outperformer}

We first consider the two-dimensional outperformer spread studied
in~\cite{GMZ26,Abb+26}, with
\[
X_0=(100,100),\qquad
T=1,\qquad
r=0,\qquad
\rho\in[-0.5,0.5],
\]
\[
\sigma_{\min}^1=\sigma_{\min}^2=0.1,
\qquad
\sigma_{\max}^1=\sigma_{\max}^2=0.2,
\]
and payoff
\[
g(x)
=
(x_2-0.9x_1)^+
-
(x_2-1.1x_1)^+.
\]
We use $N=128$ time steps and take the finite-difference value
$12.83$ as reference. In addition to this reference price, the corresponding
high-accuracy finite-difference approximation of the value function is used
as an independent candidate in the a posteriori constructions. It therefore
provides a useful benchmark for assessing the effect of candidate accuracy,
and in particular derivative accuracy, on the different dual estimates.

For the backward SPG approximation, we consider the standard critic and the
first- and second-order Sobolev critics described in
\Cref{subsubsec:spg-numerical-scheme}. For the three SPG candidates and the
PINN, we report the corresponding primal Monte Carlo estimate. For all
candidates, including the finite-difference benchmark, we compute the three
numerical dual estimates introduced in
\Cref{subsubsec:primal-dual-evaluation}: the martingale dual estimate
$\widehat U_{\mathrm M}$, the direct $\mathcal D$-estimate
$\widehat U_{\mathcal D}$, and the residual-based estimate
$\widehat U_{\varepsilon}$. The reported confidence intervals quantify
Monte Carlo sampling uncertainty only and do not include the other numerical
errors entering the constructions.

\begin{table}[H]
\centering
\caption{Two-dimensional outperformer spread. Panel~(a) reports the numerical
estimates, with Monte Carlo quantities given together with the half-width of
their $95\%$ confidence interval. Panel~(b) reports the corresponding
wall-clock computational times.}
\label{tab:d2-outperformer}

\textbf{(a) Numerical estimates}

\vspace{0.4em}

\begin{adjustbox}{max width=\textwidth}
\begin{tabular}{lccccc}
\toprule
Candidate
& Primal MC
& Candidate value
& $\widehat U_{\mathrm M}$
& $\widehat U_{\varepsilon}$
& $\widehat U_{\mathcal D}$
\\
\midrule
Standard SPG
& $12.7710\pm0.0209$
& $12.8180$
& $14.0547\pm0.0524$
& $15.5486\pm0.0071$
& $14.6497\pm0.0199$
\\
Sobolev-1 SPG
& $12.8008\pm0.0208$
& $12.7553$
& $13.4447\pm0.0608$
& $13.3797\pm0.0019$
& $13.3020\pm0.0211$
\\
Sobolev-2 SPG
& $12.7946\pm0.0202$
& $12.8815$
& $13.4851\pm0.0552$
& $13.2626\pm0.0023$
& $13.1778\pm0.0188$
\\
PINN
& $12.7910\pm0.0211$
& $12.8513$
& $13.4000\pm0.0528$
& $12.8728\pm0.0004$
& $12.8949\pm0.0206$
\\
FD candidate
& ---
& $12.8300$
& $13.2104\pm0.0497$
& $12.8417\pm0.0001$
& $12.8291\pm0.0189$
\\
\bottomrule
\end{tabular}
\end{adjustbox}

\vspace{1em}

\textbf{(b) Computational times}

\vspace{0.4em}

\begin{adjustbox}{max width=\textwidth}
\begin{tabular}{lcccc}
\toprule
Candidate
& Primal training
& $\widehat U_{\mathrm M}$
& $\widehat U_{\varepsilon}$
& $\widehat U_{\mathcal D}$
\\
\midrule
Standard SPG
& $148$ s
& $277$ s
& $265$ s
& $254$ s
\\
Sobolev-1 SPG
& $223$ s
& $291$ s
& $281$ s
& $249$ s
\\
Sobolev-2 SPG
& $294$ s
& $290$ s
& $266$ s
& $268$ s
\\
PINN
& $138$ s
& $282$ s
& $265$ s
& $271$ s
\\
FD candidate
& ---
& $286$ s
& $264$ s
& $260$ s
\\
\bottomrule
\end{tabular}
\end{adjustbox}

\vspace{0.4em}

{\small
``Primal training'' denotes the time required to construct the candidate.
The remaining columns report the additional time required to compute the
corresponding dual quantity from the trained candidate.
}
\end{table}

The primal quantities are comparatively insensitive to the choice of
candidate: the induced feedback policies and the candidate values at the
initial condition all remain close to the finite-difference reference.
The dual estimates are substantially more discriminating. In particular,
the standard SPG critic yields a satisfactory value approximation but much
looser dual estimates, indicating that value accuracy alone does not ensure
sufficiently accurate derivative information for the a posteriori
constructions.

Derivative-informed training considerably improves the dual estimates.
For the two Hessian-based constructions, the numerical ordering is
\[
\text{FD}
\;<\;
\text{PINN}
\;<\;
\text{Sobolev-2}
\;<\;
\text{Sobolev-1}
\;<\;
\text{Standard}.
\]
The improvement from Sobolev-1 to Sobolev-2 is particularly informative:
the second-order loss directly targets curvature information and improves
both $\widehat U_{\mathcal D}$ and $\widehat U_{\varepsilon}$. By contrast,
the martingale dual, which depends on the candidate gradient but not on its
Hessian, does not improve from Sobolev-1 to Sobolev-2. This supports the
interpretation that the additional Sobolev-2 supervision primarily improves
the second-order information relevant to the Hessian-based duals.

The finite-difference candidate provides the natural benchmark. At the
continuous-time level, the exact value function satisfies
\[
\mathcal D(\Gamma_V)(0,X_0)
=
V(0,X_0)
\]
by \Cref{prop:uvm-convex-duality}, while its BSB residual and terminal
discrepancy vanish identically. The corresponding numerical direct and
residual estimates are close to the reference value, providing a consistency
check for the two implementations. The remaining discrepancy in the
residual estimate is larger than its Monte Carlo uncertainty and therefore
reflects the other numerical approximations entering the finite-difference
candidate and its subsequent evaluation.

The PINN also performs well for both Hessian-based constructions. Its strong
residual estimate is consistent with its training objective, which directly
penalizes the BSB residual. The direct $\mathcal D$-estimate is also tight,
although this quantity is not targeted by the PINN loss, suggesting that the
PINN recovers useful second-order spatial information in this example.

The martingale dual exhibits a different pattern. The finite-difference
candidate again gives the tightest estimate, while the PINN and the two
Sobolev critics are closer to one another. The differences between some of
these estimates are comparable with their Monte Carlo uncertainty and should
therefore not be over-interpreted. The persistent gap between the
finite-difference martingale estimate and the reference is consistent with
the additional discretization and finite control-search errors entering the
numerical martingale construction. In particular, under the assumptions
considered in \cite{HLR16,Abb+26}, the relevant time-discretization
contribution is of order $(T/N)^{1/4}$.

For the backward SPG candidates and the finite-difference benchmark,
the direct $\mathcal D$-estimate is numerically tighter than the
corresponding residual-based estimate. The PINN is the only exception,
for which the residual estimate is slightly tighter.

Finally, Panel~(b) shows that derivative-informed SPG training entails a
moderate additional computational cost. First-order Sobolev training takes
approximately $1.5$ times the standard SPG training time, while second-order
Sobolev training takes approximately twice as long. The costs of the
subsequent dual evaluations are broadly comparable across candidates.

\subsubsection{Best-of butterfly}
\label{subsubsec:d2-butterfly}

Finally, we consider the two-dimensional best-of butterfly as a
confirmation experiment. The parameters are
\[
X_0=(100,100),\qquad
T=0.25,\qquad
r=0.05,
\]
\[
\sigma_{\min}^i=0.3,\qquad
\sigma_{\max}^i=0.5,
\qquad
\rho\in[0.3,0.5],
\]
and the payoff is
\[
g(x)
=
(x_{\max}-85)^+
-2(x_{\max}-100)^+
+(x_{\max}-115)^+,
\qquad
x_{\max}=\max\{x_1,x_2\}.
\]
The finite-difference reference value is $6.70$. We use the same numerical
methodology and candidate constructions as in
\Cref{subsubsec:d2-outperformer}. The purpose of this example is to verify
that the conclusions obtained for the outperformer spread persist for a
qualitatively different, nonconvex payoff.

\begin{table}[H]
\centering
\caption{Two-dimensional best-of butterfly. Panel~(a) reports the numerical
estimates, with Monte Carlo quantities given together with the half-width of
their $95\%$ confidence interval. Panel~(b) reports the corresponding
wall-clock computational times.}
\label{tab:d2-butterfly}

\textbf{(a) Numerical estimates}

\vspace{0.4em}

\begin{adjustbox}{max width=\textwidth}
\begin{tabular}{lccccc}
\toprule
Candidate
& Primal MC
& Candidate value
& $\widehat U_{\mathrm M}$
& $\widehat U_{\varepsilon}$
& $\widehat U_{\mathcal D}$
\\
\midrule
Standard SPG
& $6.6083\pm0.0138$
& $6.6512$
& $7.8710\pm0.0364$
& $8.5760\pm0.0048$
& $7.9733\pm0.0144$
\\
Sobolev-1 SPG
& $6.6157\pm0.0137$
& $6.6251$
& $7.6593\pm0.0389$
& $7.4220\pm0.0019$
& $7.0620\pm0.0138$
\\
Sobolev-2 SPG
& $6.6132\pm0.0136$
& $6.6624$
& $7.6999\pm0.0376$
& $7.2929\pm0.0021$
& $6.9811\pm0.0135$
\\
PINN
& $6.6741\pm0.0139$
& $6.7058$
& $7.6022\pm0.0347$
& $6.7440\pm0.0004$
& $6.7247\pm0.0136$
\\
FD candidate
& ---
& $6.7000$
& $7.4190\pm0.0331$
& $6.7172\pm0.0001$
& $6.6997\pm0.0130$
\\
\bottomrule
\end{tabular}
\end{adjustbox}

\vspace{1em}

\textbf{(b) Computational times}

\vspace{0.4em}

\begin{adjustbox}{max width=\textwidth}
\begin{tabular}{lcccc}
\toprule
Candidate
& Primal training
& $\widehat U_{\mathrm M}$
& $\widehat U_{\varepsilon}$
& $\widehat U_{\mathcal D}$
\\
\midrule
Standard SPG
& $151$ s
& $275$ s
& $263$ s
& $256$ s
\\
Sobolev-1 SPG
& $225$ s
& $289$ s
& $279$ s
& $251$ s
\\
Sobolev-2 SPG
& $297$ s
& $292$ s
& $268$ s
& $270$ s
\\
PINN
& $140$ s
& $284$ s
& $266$ s
& $270$ s
\\
FD candidate
& ---
& $284$ s
& $263$ s
& $259$ s
\\
\bottomrule
\end{tabular}
\end{adjustbox}

\vspace{0.4em}

{\small
``Primal training'' denotes the time required to construct the candidate.
The remaining columns report the additional time required to compute the
corresponding dual quantity from the trained candidate.
}
\end{table}

The results confirm the qualitative conclusions of
\Cref{subsubsec:d2-outperformer}.

Derivative-informed SPG training substantially improves the dual estimates
relative to the standard critic, while second-order Sobolev supervision
further improves the two Hessian-based constructions
$\widehat U_{\mathcal D}$ and $\widehat U_{\varepsilon}$. As in the
outperformer experiment, this improvement is specific to the
Hessian-sensitive estimates and is not accompanied by a systematic
improvement of the martingale dual.

For the Hessian-based constructions, the PINN and finite-difference
candidates remain very close to the reference, with the finite-difference
candidate again providing the tightest benchmark. The martingale estimates
are more dispersed, including for these high-quality candidates, consistently
with the additional discretization and control-search errors inherent in that
construction. The direct $\mathcal D$-estimate is again tighter than the
residual-based estimate for the Sobolev candidates. Overall, the experiment
confirms the conclusions of the outperformer spread for a qualitatively
different and nonconvex payoff, with comparable computational costs.

\bibliography{references}

\appendix

\section{Technical proofs}
\label{app:technical}

\begin{proof}[Proof of~\Cref{lem:state-estimates}]
The case $t=T$ is immediate, so let $t<T$. Set
\[
C_a
\coloneqq
\sup_{a\in A}\|a\|
<+\infty.
\]
For a fixed $\alpha\in\mathcal A^t$, write
\eqref{eq:UVM-multi-dynamics} as
\[
dX_u
=
\mu(X_u)\,du
+
\sigma_\alpha(u,X_u)\,dW_u^t,
\]
where
\[
\mu(y)\coloneqq ry,
\qquad
\sigma_\alpha(u,y)
\coloneqq
\operatorname{diag}(y)\alpha_u.
\]
Since $A$ is compact,
\[
|\mu(y)-\mu(z)|
\leq
|r|\,|y-z|,
\qquad
\|\sigma_\alpha(u,y)-\sigma_\alpha(u,z)\|
\leq
C_a\,|y-z|,
\]
and
\[
|\mu(y)|
\leq
|r|\,|y|,
\qquad
\|\sigma_\alpha(u,y)\|
\leq
C_a\,|y|,
\]
uniformly in $u$ and $\alpha$. Hence the hypotheses
\cite[(D.2)-(D.3)]{FS06} are satisfied with constants independent of
$t$, $x$, $z$, and $\alpha$.

Let first $p\geq2$. By~\cite[(D.5)]{FS06}, with $T-t\leq T$,
\[
\mathbb E^{\mathbb P^t}
\left[
\sup_{u\in[t,T]}
|X_u^{\alpha,t,x}|^p
\right]
\leq
C_p(1+|x|^p),
\]
which proves~\eqref{eq:moment-estimate}. Likewise,
\cite[(D.8)]{FS06}, applied to the solutions with initial conditions
$x$ and $z$, yields
\[
\mathbb E^{\mathbb P^t}
\left[
\sup_{u\in[t,T]}
|X_u^{\alpha,t,x}-X_u^{\alpha,t,z}|^p
\right]
\leq
C_p|x-z|^p,
\]
which proves~\eqref{eq:state-stability}.

Finally, applying~\cite[(D.4)]{FS06} on the interval $[t,t+h]$ gives
\[
\mathbb E^{\mathbb P^t}
\left[
\sup_{u\in[t,t+h]}
|X_u^{\alpha,t,x}-x|^p
\right]
\leq
C_p(1+|x|^p)h^{p/2}.
\]
In particular,
\[
\mathbb E^{\mathbb P^t}
\left[
|X_{t+h}^{\alpha,t,x}-x|^p
\right]
\leq
C_p(1+|x|^p)h^{p/2},
\]
which is~\eqref{eq:state-increment}.

For $0<p<2$, the three estimates follow from their $p=2$ versions
and H\"older's inequality. The constants depend only on $p$, $T$, $r$,
and the compact set $A$, and are therefore independent of $t$, $x$,
$z$, and $\alpha$.
\end{proof}

\begin{proof}[Proof of~\Cref{lem:candidate-residual-integrability}]
We first estimate the first-order term in~\eqref{eq:uvm-residual}.
Fix $(t,x)\in[0,T)\times(0,+\infty)^d$ and
$i\in\{1,\ldots,d\}$, and set
\[
x^{(i)}
\coloneqq
x+x^ie_i.
\]
Taylor's formula in the $i$-th coordinate gives
\[
v(t,x^{(i)})-v(t,x)
=
x^i\partial_i v(t,x)
+
\int_0^{x^i}
(x^i-s)
\partial_{ii}^2v(t,x+se_i)\,ds.
\]
For $y=x+se_i$, $0\leq s\leq x^i$, one has
$y^i\geq x^i$ and $|y|\leq2|x|$. Since
\[
\partial_{ii}^2v(t,y)
=
\frac{2\Gamma_v^{ii}(t,y)}{(y^i)^2},
\]
\eqref{eq:candidate-terminal-growth} implies
\[
|\partial_{ii}^2v(t,y)|
\leq
\frac{C(1+|x|^p)}
{(x^i)^2\sqrt{T-t}}.
\]
It follows that
\[
\left|
\int_0^{x^i}
(x^i-s)
\partial_{ii}^2v(t,x+se_i)\,ds
\right|
\leq
\frac{C(1+|x|^p)}{\sqrt{T-t}}.
\]
On the other hand, the polynomial growth of $v$ yields, for some
$m\geq0$,
\[
|v(t,x^{(i)})|+|v(t,x)|
\leq
C(1+|x|^m).
\]
Since $T-t\leq T$, this nonsingular contribution can be absorbed into
$(T-t)^{-1/2}$. Hence, after increasing the polynomial exponent if
necessary,
\[
|x^i\partial_i v(t,x)|
\leq
\frac{C(1+|x|^q)}{\sqrt{T-t}}
\]
for some $q\geq1$, uniformly in $i$. Summing over
$i=1,\ldots,d$ gives
\begin{equation}
\label{eq:candidate-weighted-gradient-growth}
|x^\top\nabla_xv(t,x)|
\leq
\frac{C(1+|x|^q)}{\sqrt{T-t}}.
\end{equation}

By compactness of $A$,
\[
|H^\star[\gamma]|
\leq
C\|\gamma\|,
\qquad
\gamma\in\mathbb S^d.
\]
Combining~\eqref{eq:uvm-residual},
\eqref{eq:candidate-terminal-growth},
\eqref{eq:candidate-weighted-gradient-growth}, and the polynomial growth
of $v$, and increasing $q$ if necessary, gives
\eqref{eq:candidate-residual-growth}.

We now fix $q'\geq1$. By Minkowski's inequality,
\[
\left\|
\int_t^T
e^{-r(s-t)}
\left|
\mathcal R(v)(s,X_s^{\alpha,t,x})
\right|\,ds
\right\|_{L^{q'}(\mathbb P^t)}
\leq
\int_t^T
e^{-r(s-t)}
\left\|
\mathcal R(v)(s,X_s^{\alpha,t,x})
\right\|_{L^{q'}(\mathbb P^t)}
\,ds.
\]
Using~\eqref{eq:candidate-residual-growth},
\[
\left\|
\mathcal R(v)(s,X_s^{\alpha,t,x})
\right\|_{L^{q'}(\mathbb P^t)}
\leq
\frac{
C\left(
1+
\left\|
X_s^{\alpha,t,x}
\right\|_{L^{qq'}(\mathbb P^t)}^q
\right)
}
{\sqrt{T-s}}.
\]
Hence, by \Cref{lem:state-estimates},
\begin{align*}
\sup_{\alpha\in\mathcal A^t}
\left\|
\int_t^T
e^{-r(s-t)}
\left|
\mathcal R(v)(s,X_s^{\alpha,t,x})
\right|\,ds
\right\|_{L^{q'}(\mathbb P^t)}
&\leq
C_{q'}
(1+|x|^q)
\int_t^T
\frac{ds}{\sqrt{T-s}}
\\
&=
2C_{q'}
(1+|x|^q)\sqrt{T-t}
<+\infty.
\end{align*}
Raising both sides to the power $q'$ yields
\[
\sup_{\alpha\in\mathcal A^t}
\mathbb E^{\mathbb P^t}
\left[
\left(
\int_t^T
e^{-r(s-t)}
\left|
\mathcal R(v)(s,X_s^{\alpha,t,x})
\right|\,ds
\right)^{q'}
\right]
<+\infty,
\]
which proves~\eqref{eq:candidate-residual-integrability}.
\end{proof}

\end{document}